\title{The subpower membership problem of 2-nilpotent algebras} 
\author{Michael Kompatscher}{Department of Algebra, MFF, Charles University, Prague, Czechia \and \url{https://www2.karlin.mff.cuni.cz/~kompatscher/} }{ kompatscher@karlin.mff.cuni.cz}{https://orcid.org/0000-0002-0163-6604}{}
\authorrunning{M. Kompatscher} 
\keywords{subpower membership problem, Mal'tsev algebra, compact representation, nilpotence, clonoids} 
\newtheorem{question}[theorem]{Question}
\theoremstyle{remark}
\newtheorem{notation}[theorem]{Notation}
\newcommand{\N}{{\mathbb N}}
\newcommand{\Z}{{\mathbb Z}}
\DeclareMathOperator{\CSP}{CSP}
\DeclareMathOperator{\Con}{Con}
\DeclareMathOperator{\SMP}{SMP}
\DeclareMathOperator{\CompRep}{CompRep}
\newcommand{\alg}[1]{\mathbf{#1}}
\newcommand{\algA}{\alg{A}}
\newcommand{\algB}{\alg{B}}
\newcommand{\algL}{\alg{L}}
\newcommand{\algU}{\alg{U}}
\DeclareMathOperator{\Clo}{\mathsf{Clo}}
\DeclareMathOperator{\comP}{\mathsf{P}}
\DeclareMathOperator{\comNP}{\mathsf{NP}}
\DeclareMathOperator{\comcoNP}{\mathsf{coNP}}
\DeclareMathOperator{\comEXP}{\mathsf{EXP}}
\DeclareMathOperator{\comPS}{\mathsf{PSPACE}}
\DeclareMathOperator{\Diff}{Diff}
\DeclareMathOperator{\Sg}{Sg}
\DeclareMathOperator{\Sig}{Sig}
\DeclareMathOperator{\proj}{pr}
\begin{document}

\maketitle

\begin{abstract}
The subpower membership problem $\SMP(\algA)$ of a finite algebraic structure $\algA$ asks whether a given partial function from $A^k$ to $A$ can be interpolated by a term operation of $\algA$, or not. While this problem can be EXPTIME-complete in general, Willard asked whether it is always solvable in polynomial time if $\algA$ is a Mal'tsev algebras. In particular, this includes many important structures studied in abstract algebra, such as groups, quasigroups, rings, Boolean algebras. In this paper we give an affirmative answer to Willard's question for a big class of 2-nilpotent Mal'tsev algebras. We furthermore develop tools that might be essential in answering the question for general nilpotent Mal'tsev algebras in the future.
\end{abstract}

\section{Introduction}
It is a recurring and well-studied problem in algebra to describe the closure of a given list of elements under some algebraic operations (let us only mention the affine and linear closure of a list of vectors, or the ideal generated by a list of polynomials). But also in a computational context, this problem has a rich history, appearing in many areas of computer science. In its formulation as \emph{subalgebra membership problem}, the task is to decide whether a given finite list of elements of an algebraic structure generates another element or not. 

Depending on the algebraic structures studied, a variety of different problems emerges. One of the most well-known examples is the \emph{subgroup membership problem}, in which the task is to decide, if for a given set of permutations $\alpha_1,\ldots,\alpha_n$ on a finite set $X$, another permutation $\beta$ belongs to the subgroup generated by $\alpha_1,\ldots,\alpha_n$ in $S_X$. This problem can be solved in polynomial-time by the famous Schreier-Sims algorithm \cite{sims-SSalgorithm}, whose runtime was analysed in \cite{FHL-permuationgroups} and \cite{knuth-SS}. The existence of such efficient algorithms is however not always guaranteed: if the symmetric group $S_X$ is for instance replaced by the full transformation semigroup on $X$, the corresponding membership problem is $\comPS$-complete \cite{kozen-lowerbounds}.

A common feature of many algorithms for the subalgebra membership problem is to generate canonical generating sets of some sorts (such as computing the basis of a vector space via Gaussian elimination, or computing a Gr\"obner basis via Buchberger's algorithm to solve the ideal membership problem \cite{buchberger-phd}). But, in general, this is where the similarities end - depending on the algebraic structure, and the encoding of the input, the problem can range over a wide range of complexities, and have applications in vastly different areas such as cryptography \cite{SU-Thompsonsgroup, SZ-sgmcrypto}, computer algebra \cite{buchberger-phd, MM-IMPhard}, or proof complexity \cite{kozen-lowerbounds, kozen-complexityalgebras}.


In this paper, we study a version of the subalgebra membership problem that is called the \emph{subpower membership problem}. For a fixed, finite algebraic structure $\algA$ (henceforth also just called an \emph{algebra}) its subpower membership problem $\SMP(\algA)$ is the problem of deciding if a given tuple $\mathbf b \in \algA^k$ is in the subalgebra of $\algA^k$ generated by some other input tuples $\mathbf a_1,\ldots,\mathbf a_n \in \algA^k$ (here $n$ and $k$ are not fixed, but part of the input). This is equivalent to checking, whether the $n$-ary partial function that maps $\mathbf a_1,\ldots,\mathbf a_n$ component-wise to $\mathbf b$ can be interpolated by a term function of $\algA$. For example, if $p$ is a prime, $\SMP(\Z_p)$ is the problem of checking whether some vector $\mathbf b \in \Z_p^k$ is in the linear closure of $\mathbf a_1,\ldots,\mathbf a_n \in \Z_p^k$; this can easily be solved by Gaussian elimination. More general, for any finite group $\mathbf G$,  $\SMP(\mathbf G)$ can be solved in polynomial time by a version of the Schreier-Sims algorithm \cite{willard-talk}.

Besides being a natural problem in algebra, the subpower membership problem found some applications in some learning algorithms \cite{BD-Maltsev, DJ-learability, IMMVW-fewsubpowers}.  Moreover, an efficient algorithm for $\SMP(\algA)$ implies that it is also feasible to represent the relations invariant by some generating set of tuples. It was in particular remarked (see e.g.~\cite{BMS-SMP}), that a polynomial-time algorithm for $\SMP(\algA)$ would allow to define infinitary constraint satisfaction problems, in which the constraint relations are given by some generating tuples (with respect to $\algA$). This infinitary version of $\CSP$s has the benefit that most of the algebraic machinery to $\CSP$s (see e.g. \cite{BKW-polymorphisms}) still applies.

Exhaustively generating the whole subalgebra generated by $\mathbf a_1,\ldots,\mathbf a_n$ in $ \algA^k$ gives an exponential time algorithm for $\SMP(\algA)$. And, in general, we cannot expect to do better: In \cite{kozik-SMPhard} Kozik constructed a finite algebra $\algA$ for which $\SMP(\algA)$ is $\comEXP$-complete. Even semigroups can have $\comPS$-complete subpower membership problem \cite{BCMS-SMPsemigroup}.

However, for so called \emph{Mal'tsev algebras},  better lower bounds are known.  Mal'tsev algebras are algebras defined by having a \emph{Mal'tsev term} $m$, i.e. a term satisfying the identities $y = m(x,x,y) = m(y,x,x)$ for all $x,y$. Mal'tsev algebras lie at the intersection of many areas of mathematics: they include algebraic structures of ubiquitous importance (groups, fields, vector spaces), but also appear in logic (Boolean algebras, Heyting algebras), commutative algebra (rings, modules, $K$-algebras), and non-associative mathematics (quasigroups, loops). Mayr showed in \cite{mayr-SMP} that the subpower membership problem of every Mal'tsev algebra is in $\comNP$. His proof is based on the fact that every subalgebra $\mathbf R \leq \algA^n$ has a small generating set, which generates every element of $\mathbf R$ in a canonical way (a so-called \emph{compact representation}). Thus, to solve the subpower membership problem, one can ``guess'' a compact representation of the subalgebra generated by $\mathbf a_1,\ldots,\mathbf a_k$, and then check in polynomial time if it generates $\mathbf b$.  If such a compact representation can be moreover found in \emph{deterministic} polynomial time, then $\SMP(\algA)$ is in $\comP$; this is, in fact, the dominant strategy to prove tractability.

So far, the existence of such polynomial time algorithms was verified for groups and rings \cite{willard-talk,  FHL-permuationgroups}, supernilpotent algebras \cite{mayr-SMP}, and algebras that generate residually finite varieties \cite{BMS-SMP}.  On the other hand, no examples of $\comNP$-hard or intermediate complexity are known. This leads to the question whether $\SMP(\algA) \in \comP$ for \emph{all} finite Mal'tsev algebras $\algA$ \cite{willard-talk}.  On a broader scale, this question was also posed for algebras with \emph{few subpowers} \cite[Question 8]{IMMVW-fewsubpowers}.

An elementary class of Mal'tsev algebras, for which the question still remains open, are \emph{nilpotent} algebra. In fact, they can also be seen as an important stepping stone in answering \cite[Question 8]{IMMVW-fewsubpowers}, as nilpotent Mal'tsev algebras coincide with nilpotent algebras with few subpowers. Generalizing the concept of nilpotent groups, nilpotent algebras are defined by having a central series of congruences. While they have several nice structural properties, in general nilpotent algebras do not satisfy the two finiteness conditions mentioned above (supernilpotence, residual finiteness), thus they are a natural starting point when trying to generalize known tractability results.  But even for 2-nilpotent algebras not much is known, then all polynomial-time algorithms were only constructed by ad-hoc arguments for concrete examples (such as Vaughan-Lee's 12-element loop \cite{mayr-loop}).

The first contribution of this paper is to prove that all 2-nilpotent algebras of size $p\cdot q$ for two primes $p \neq q$ have a tractable subpower membership problem. In fact, we prove an even stronger result in Theorem \ref{theorem:main}: $\SMP(\algA)$ is in $\comP$, whenever $\algA$ has a central series $0_\algA < \rho < 1_\algA$ such that $|\algA/\rho| = p$ is a prime, and the blocks of $\rho$ have size coprime to $p$.

While this is still a relatively restricted class of nilpotent algebras, our methods have the potential to generalize to all 2-nilpotent Mal'tsev algebras and beyond. Thus, our newly developed tools to analyze $\SMP$ can be regarded as the second main contribution. More specifically, in Theorem \ref{theorem:SMP} we show that whenever $\algL \otimes \algU$ is a \emph{wreath product} (see Section \ref{sect:diffclon}), such that $\algU$ is supernilpotent, then $\SMP(\algL \otimes \algU)$ reduces to $\SMP(\algL \times \algU)$ (which is polynomial-time solvable by \cite{mayr-SMP}) and a version of the subpower membership problem for a multi-sorted algebraic object called a \emph{clonoid} from $\algU$ to $\algL$. This reduction in particular applies to all 2-nilpotent algebras; an analysis of clonoids between affine algebras then leads to Theorem \ref{theorem:main}. If, in future research, we could get rid of the condition of $\algU$ being supernilpotent, this would provide a strong tool in studying general Mal'tsev algebras, as every Mal'tsev algebra with non-trivial center can be decomposed into a wreath product.

Our paper is structured as follows: Section \ref{sect:prelim} contains preliminaries and some background on universal algebra. In Section \ref{sect:diffclon} we discuss how Mal'tsev algebras with non-trivial center can be represented by a wreath product and we introduce the concept of \emph{difference clonoid} of such a representation. In Section \ref{sect:reduction} we discuss some situations, in which the subpower membership problem of a wreath product can be reduced to the membership problem of the corresponding difference clonoid. In particular, we prove Theorem \ref{theorem:SMP}. Section \ref{section:clonoids} contains an analysis of clonoids between $\Z_p$ and coprime Abelian groups, which then leads to the proof of our main result, Theorem \ref{theorem:main}. In Section \ref{sect:discussion}
we discuss some possible directions for future research.


\section{Preliminaries} \label{sect:prelim}
In the following, we are going to discuss some necessary notions from universal algebra. For more general background we refer to the textbooks \cite{bergman-book, BS-book}. For background on commutator theory we refer to \cite{FM-commutator-theory} and \cite{AM-commutator}. For an introduction to Malt'sev algebras and compact representations we refer to \cite[Chapters 1.7-1.9]{brady-CSPnotes}.

In this paper, we are going to denote tuples by lower case bold letters, e.g. $\mathbf a \in A^k$. In order to avoid double indexing in some situations, we are going to use the notation $\mathbf a(i)$ to denote the $i$-th entry of $\mathbf a$, i.e. $\mathbf a = (\mathbf a(1), \mathbf a(2), \ldots, \mathbf a(k))$. However, otherwise we are going to follow standard notation as used e.g. in \cite{bergman-book}.

\subsection{Basic notions for general algebras}
An \emph{algebra} $\algA = (A;(f_i^{\algA})_{i \in I})$ is a first-order structure in a purely functional language $(f_i)_{i \in I}$ (where each symbol $f_i$ has an associated \emph{arity}). We say $\algA$ is finite if its domain $A$ is finite.  A \emph{subalgebra} $\algB = (B;(f_i^{\algB})_{i \in I})$ of an algebra $\algA = (A;(f_i^{\algA})_{i \in I})$ (denoted $\algB \leq \algA$) is an algebra obtained by restricting all \emph{basic operations} $f_i^{\algA}$ to a subset $B \subseteq A$ that is invariant under all $f_i^{\algA}$'s. The subalgebra generated by a list of elements $a_1,\ldots,a_n$, denoted by $\Sg_\algA(a_1,\ldots,a_n)$ is the smallest subalgebra of $\algA$ that contains $a_1,\ldots,a_n$. The \emph{product} $\prod_{i \in I} \algA_i$ of a family of algebras $(\algA_i)_{i \in I}$ in the same language is defined as the algebra with domain $\prod_{i\in I} A_i$, whose basic operations are defined coordinate-wise. The power $\algA^n$ is the product of $n$-many copies of $\algA$. Subalgebras of (finite) powers of $\algA$ are sometimes also called \emph{subpowers} of $\algA$, which motivates the name ``subpower membership problem''. So, formally the subpower membership problem of $\algA$ can be stated as follows:\\

\noindent $\SMP(\algA)$\\
\textsc{Input:} $\mathbf b, \mathbf a_1,\ldots,\mathbf a_n \in \algA^k$ for some $n,k\in\N$\\
\textsc{Question:} Is $\mathbf b \in \Sg_{\algA^k}(\mathbf a_1,\ldots,\mathbf a_n)$?\\

Note that the subpowers of $\algA$ are exactly the relations on $A$ that are invariant under $\algA$. A \emph{congruence} $\alpha$ of $\algA$ is an equivalence relation on $A$ that is invariant under $\algA$. We write $\Con(\algA)$ for the lattice of all congruence of $\algA$. We denote the minimal and maximal element of this lattice by $0_\algA = \{(x,x) \mid x \in A\}$ and $1_\algA = \{(x,y) \mid x,y \in A\}$. For every congruence $\alpha \in \Con(\algA)$, one can form a quotient algebras $\algA/\alpha$ in the natural way.

The \emph{term operations} of an algebra $\algA$ are all finitary operations that can be defined by a composition of basic operations of $\algA$. Two standard ways to represent them is by terms or circuits in the language of $\algA$. For a term or circuit $t(x_1,\ldots,x_n)$ in the language of $\algA$, we write $t^{\algA}(x_1,\ldots,x_n)$ for the induced term operation on $A$.  Occasionally, if it is clear from the context, we are not going to distinguish between a term/circuit and the corresponding term operation. The term operations of an algebra $\algA$ are closed under composition and contain all projections, therefore they form an algebraic object called a \emph{clone}. For short, we denote this \emph{term clone} of an algebra $\algA$ by $\Clo(\algA)$. Note that $\Sg_{\algA^k}(\mathbf a_1,\ldots,\mathbf a_n) = \{t(\mathbf a_1,\ldots,\mathbf a_n) \mid t \in \Clo(\algA) \}$.

We call a ternary operation $m^{\algA}(x,y,z) \in \Clo(\algA)$ a \emph{Mal'tsev term} if it satisfies the identities $m^{\algA}(y,x,x) = m^{\algA}(x,x,y) = y$ for all $x,y\in A$, and call $\algA$ a \emph{Mal'tsev algebra} if it has a Mal'tsev term. For instance, every group is a Mal'tsev algebra with Mal'tsev term $m(x,y,z)=xy^{-1}z$. Mal'tsev terms are a classic topic of study in universal algebra (see e.g. \cite[Chapter 7]{bergman-book}), and are in particular known to characterize congruence permutable varieties.



\subsection{Clonoids}
We are also going to rely on a multi-sorted generalisation of clones, so-called \emph{clonoids} that were first introduced in \cite{AM-clonoids} (in a slightly less general way). For a set of operations between two sets $\mathcal C \subseteq \{f \colon A^n \to B \mid n\in \mathbb N\}$, and $k \in \N$ let us write $\mathcal C^{(k)} = \{f \colon A^k \to B \mid f\in \mathcal C\}$ for the subset of $k$-ary functions. Then, for two algebras $\algA = (A,(f_i)_{i\in I})$, $\algB = (B,(g_j)_{j\in J})$ (in possibly different domains and languages), a set $\mathcal C \subseteq \{f \colon A^n \to B \mid n \in \mathbb N\}$ is called a \emph{clonoid from $\algA$ to $\algB$}, or \emph{$(\algA,\algB)$-clonoid}, if it is closed under composition with term operations of $\algA$ from the inside, and $\algB$ from the outside, i.e.: $\forall n,k\in \N$
\begin{bracketenumerate}
\item $f \in \mathcal C^{(n)},  t_1,\ldots, t_n \in \Clo(\algA)^{(k)} \Rightarrow f \circ (t_1,\ldots, t_n) \in \mathcal C^{(k)}$ 
\item $s \in \Clo(\algB)^{(n)}, f_1,\ldots, f_n \in \mathcal C^{(k)} \Rightarrow s \circ (f_1,\ldots, f_n) \in \mathcal C^{(k)}$. 
\end{bracketenumerate}

\subsection{Commutator theory}
Commutator theory is the subfield of universal algebra that tries to generalise notions such as central subgroups, nilpotence, or solvability from group theory to general algebras. The most commonly used framework is based on so-called term-conditions, which we outline in the following.

Let $\algA$ be an algebra. For congruences $\alpha,\beta,\gamma \in \Con(\algA)$ we say that \emph{$\alpha$ centralized $\beta$ modulo $\gamma$} (and write $C(\alpha,\beta;\gamma)$) if and only if for all $p(\mathbf x, \mathbf y) \in \Clo(\algA)$, and all tuples $\mathbf a, \mathbf b \in A^n$, $\mathbf c, \mathbf d \in A^m$, such that $a_i \sim_{\alpha} b_i$ for $i=1,\ldots,n$ and $c_j \sim_{\beta} d_j$ for $j=1,\ldots,m$, the implication
\begin{align*}
p(\mathbf a, \mathbf c) \sim_{\gamma} p(\mathbf a, \mathbf d) \Rightarrow p(\mathbf b, \mathbf c) \sim_{\gamma} p(\mathbf b, \mathbf d)
\end{align*}
holds.
A congruence $\alpha$ is called \emph{central} if $C(\alpha,0_{\algA};1_{\algA})$ holds. The \emph{center} is the biggest central congruence. An algebra $\algA$ is called \emph{$n$-nilpotent} if there is a \emph{central series of length $n$}, i.e. a series of congruences $0_{\algA} = \alpha_0 \leq \alpha_1 \leq \cdots \leq \alpha_n = 1_{\algA}$, such that $C(\alpha_{i+1},1_{\algA}; \alpha_i)$ for $i = 0,\ldots,n-1$. An algebra $\algA$ is called \emph{Abelian}, if it is $1$-nilpotent, i.e. $C(1_{\algA},1_{\algA}; 0_{\algA})$ holds.

We are, however, not going to work directly with these definitions. There is a rich structural theory in the special case of Mal'tsev algebras (and, more general, in congruence modular varieties \cite{FM-commutator-theory}) that gives us very useful characterizations of many commutator theoretical properties. 

By a result of Herrmann \cite{herrmann-affine}, a Mal'tsev algebra $\algA$ is Abelian if and only if it is \emph{affine}, i.e.  all of its term operations are affine combination $\sum_{i=1}^n \alpha_i x_i + c$ over some module; in particular the Mal'tsev term is then equal to $x-y+z$. More generally, we are going to a result of Freese and McKenzie \cite{FM-commutator-theory} that states that a Mal'tsev algebra $\algA$ with a central congruence $\rho$ can always be written as a \emph{wreath product} $\algL \otimes \algU$, such that $\algL$ is affine and $\algU = \algA/\rho$. We are going to discuss such wreath product representations in Section \ref{sect:diffclon}.

Lastly, we want to mention that the definition of the relation $C$ naturally generalizes to higher arities $C(\alpha_1, \ldots,\alpha_n, \beta; \gamma)$. This notion was first introduced by Bulatov; we refer to \cite{FM-commutator-theory} and \cite{AM-commutator} to more background on \emph{higher commutators}. In particular, an algebra is called \emph{$k$-supernilpotent} if $C(1_{\algA}, \ldots,1_{\algA}; 0_{\algA})$, where $1_{\algA}$ appears $k+1$ times. There are several known characterizations of supernilpotent Mal'tsev algebras. We are mainly going to use the following:

\begin{theorem}[Proposition 7.7. in \cite{AM-commutator}] \label{theorem:supernilpotent}
Let $\algA$ be a $k$-supernilpotent Mal'tsev algebra, $0 \in A$ a constant and $t,s$ two $n$-ary terms in the language of $\algA$.Then $t^{\algA} = s^{\algA}$ if and only if they are equal on all tuples from the set $S = \{ \mathbf a \in A^n \mid |\{i : \mathbf a(i) \neq 0\}| \leq k \}$.  (In fact, this is a characterization of $k$-supernilpotence for Mal'tsev algebras.)
\end{theorem}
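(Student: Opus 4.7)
The forward direction is immediate from $S \subseteq A^n$. The content lies in the converse: assuming $t^\algA$ and $s^\algA$ agree on all tuples of Hamming weight $w(\mathbf c) := |\{i : \mathbf c(i) \neq 0\}| \leq k$, deduce $t^\algA = s^\algA$ on all of $A^n$. The plan is induction on $w(\mathbf a)$, the base case $w(\mathbf a) \leq k$ being the hypothesis. Assume inductively that $t^\algA(\mathbf c) = s^\algA(\mathbf c)$ for every $\mathbf c$ of weight at most $m$ for some $m \geq k$, and fix $\mathbf a$ of weight $m+1$, with non-zero entries $b_j$ at positions $i_j$ ($j = 1, \ldots, m+1$).

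I would single out the first $k+1$ of these positions and build the ``switching cube'' $\{\mathbf a^\epsilon : \epsilon \in \{0,1\}^{k+1}\}$, where $\mathbf a^\epsilon$ agrees with $\mathbf a$ outside $\{i_1, \ldots, i_{k+1}\}$ and has $\mathbf a^\epsilon(i_j) = b_j$ if $\epsilon_j = 1$ and $\mathbf a^\epsilon(i_j) = 0$ otherwise. For every $\epsilon \neq (1, \ldots, 1)$ at least one of the chosen positions is zeroed, so $w(\mathbf a^\epsilon) \leq m$ and the inductive hypothesis gives $t^\algA(\mathbf a^\epsilon) = s^\algA(\mathbf a^\epsilon)$. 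The only remaining vertex is $\mathbf a = \mathbf a^{(1,\ldots,1)}$.

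To lift equality to $\mathbf a$, I would invoke $k$-supernilpotence via the auxiliary polynomial, parameterized by $z \in A$,
\begin{equation*}
R_z(y_1, \ldots, y_{k+1}) := m^\algA\bigl(s^\algA(y_1, \ldots, y_{k+1}, \mathbf a(i_{k+2}), \ldots),\ t^\algA(y_1, \ldots, y_{k+1}, \mathbf a(i_{k+2}), \ldots),\ z\bigr).
\end{equation*}
By the Mal'tsev identity $m^\algA(x, x, z) = z$, on each of the $2^{k+1} - 1$ cube vertices where $t^\algA = s^\algA$ we have $R_z(\mathbf a^\epsilon) = z$. The $(k+1)$-cube reformulation of the higher term condition $C(1_\algA, \ldots, 1_\algA; 0_\algA)$ with $k+1$ ones then forces $R_z(\mathbf a) = z$ as well. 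Specializing to $z := t^\algA(\mathbf a)$ and using the Mal'tsev identity $m^\algA(x, y, y) = x$,
\begin{equation*}
s^\algA(\mathbf a) = m^\algA\bigl(s^\algA(\mathbf a), t^\algA(\mathbf a), t^\algA(\mathbf a)\bigr) = R_{t^\algA(\mathbf a)}(\mathbf a) = t^\algA(\mathbf a),
\end{equation*}
closing the induction.

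For the parenthetical converse -- that the Hamming-weight property in fact characterizes $k$-supernilpotence among Mal'tsev algebras -- one reverses the argument: a failure of $k$-supernilpotence supplies a violating $(k+1)$-cube for some polynomial, which, after translating the base point to $\mathbf 0$ and using the Mal'tsev term to convert polynomial values into a pair of term operations, yields two terms agreeing on $S$ but differing somewhere. The main obstacle throughout is pinning down the exact cube formulation of the higher commutator and verifying that $R_z$ fits it; once the Mal'tsev operation is used to encode ``$t^\algA$ and $s^\algA$ agree on $\mathbf c$'' as ``$R_z(\mathbf c)$ hits the value $z$'', the rest is a short application of the cube condition followed by two uses of the Mal'tsev identities.
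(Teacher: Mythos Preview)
The paper does not contain its own proof of this statement: Theorem~\ref{theorem:supernilpotent} is quoted from \cite{AM-commutator} (Proposition~7.7 there) and used as a black box, so there is no in-paper argument to compare against.

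That said, your sketch is essentially the standard argument and is correct. Two small points worth tightening. First, your inductive hypothesis actually gives more than you claim: whenever \emph{any} $y_j=0$ (not just on the $2^{k+1}-1$ cube vertices), the full $n$-tuple fed to $t^{\algA},s^{\algA}$ has weight at most $m$, so $R_z(y_1,\dots,y_{k+1})=z$ on the entire union of coordinate hyperplanes $\{y_j=0\}$. This is exactly the ``absorbing polynomial'' formulation used in \cite{AM-commutator}. Second, to match the literal hypothesis--conclusion shape of $C(1_\algA,\dots,1_\algA;0_\algA)$, apply it to $p:=R_z$ with pairs $(0,b_j)$: for every $(c_2,\dots,c_{k+1})\in\prod_{j\ge 2}\{0,b_j\}$ with $(c_2,\dots,c_{k+1})\neq(b_2,\dots,b_{k+1})$ you have $p(0,c_2,\dots,c_{k+1})=z=p(b_1,c_2,\dots,c_{k+1})$, and the term condition then yields $p(0,b_2,\dots,b_{k+1})=p(b_1,b_2,\dots,b_{k+1})$; since the left side is $z$, so is the right. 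Your specialization $z:=t^{\algA}(\mathbf a)$ and the two Mal'tsev identities then finish exactly as you wrote. The parenthetical converse sketch is also the right idea.
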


\subsection{Compact representations and SMP}
For any subset $R \subseteq A^n$, we define its \emph{signature} $\Sig(R)$ to be the set of all triples $(i,a,b) \in \{1,\ldots,n\} \times A^2$, such that there are $\mathbf t_a, \mathbf t_b \in R$ that agree on the first $i-1$ coordinates, and $t_a(i) = a$ and $t_b(i) = b$; we then also say that $\mathbf t_a, \mathbf t_b$ are \emph{witnesses} for $(i,a,b) \in \Sig(R)$.

If $\algA$ is a Mal'tsev algebra, and $\mathbf R\leq \algA^n$, then it is known that $\mathbf R$ is already generated by every subset $S \subseteq \mathbf R$ with $\Sig(S) = \Sig(\mathbf R)$ \cite[Theorem 1.8.2.]{brady-CSPnotes}. In fact, $\mathbf R$ is then equal to the closure of $S$ under the Mal'tsev operation $m$ alone, and a tuple $\mathbf a$ is in $\mathbf R$ iff can be written as $m(\ldots m(\mathbf a_1,\mathbf b_2,\mathbf a_2), \ldots, \mathbf b_n,\mathbf a_n)$, for some $\mathbf a_i, \mathbf b_i \in S$. For given $\mathbf a \in \mathbf R$ such elements $\mathbf a_i, \mathbf b_i \in S$ can be found polynomial time in $|S|$, by picking $\mathbf a_1$ such that 
$\mathbf a_1(1) = \mathbf a(1)$, and $\mathbf a_i, \mathbf b_i \in S$ as witnesses for $m(\ldots m(\mathbf a_1,\mathbf b_2,\mathbf a_2), \ldots, \mathbf b_{i-1},\mathbf a_{i-1}))(i)$ and $a(i)$ at position $i$.

A \emph{compact representation} of $\mathbf R \leq \algA^n$ is a subset $S \subset \mathbf R$ with $\Sig(S) = \Sig(\mathbf R)$ and $|S| \leq 2|\Sig(\mathbf R)| \leq 2n|A|^2$. So, informally speaking, compact representations are small generating sets of $\mathbf R$ with the same signature. It is not hard to see that compact representations always exist.  Generalizations of compact representations exist also for relations on different domains ($\mathbf R \leq \algA_1\times \cdots \times \algA_n$), and relations invariant under algebras with few subpowers, we refer to \cite[Chapter 2]{brady-CSPnotes} for more background.

By the above, $\SMP(\algA)$ reduces in polynomial time to the problem of finding a compact representation of $\Sg_{\algA^k}(\mathbf a_1,\ldots,\mathbf a_n)$ for some input tuples $\mathbf a_1,\ldots,\mathbf a_n \in A^k$.  We are going to denote this problem by $\CompRep(\algA)$.  Conversely, it was shown in \cite{BMS-SMP} that finding a compact representations has a polynomial Turing reduction to $\SMP(\algA)$.  Note further that, to solve $\CompRep(\algA)$ it is already enough to find a subset $S \subseteq R$ with $\Sig(S) = \Sig(R)$ of polynomial size, since such a set $S$ can then be thinned out to a compact representation.

Let us call a set of pairs $\{ (\mathbf c, p_{\mathbf c}) \mid \mathbf c \in S\}$ an \emph{enumerated compact representation} of $\Sg_{\algA^k}(\mathbf a_1,\ldots,\mathbf a_n)$, if $S$ is a compact representation of $\Sg_{\algA^k}(\mathbf a_1,\ldots,\mathbf a_n)$, and every $p_{\mathbf c}$ is a circuit in the language of $\algA$ of polynomial size (in $n$ and $k$), such that $p_{\mathbf c}(\mathbf a_1,\ldots,\mathbf a_n) = \mathbf c$. Enumerated compact representations were already (implicitly) used in several proofs.  In \cite[Theorem 4.13.]{BMS-SMP} it was shown that, for algebras with few subpowers, enumerated compact representations always exist; this was used to prove that $\SMP(\algA) \in \comcoNP$.  Moreover, all of the known polynomial time algorithms for $\CompRep(\algA)$, in fact, compute enumerated compact representations. We are in particular going to need the following result that follows from \cite{mayr-SMP}: 

\begin{theorem}[\cite{mayr-SMP}] \label{theorem:SMPsupernilpotent}
Let $\algA$ be a finite supernilpotent Mal'tsev algebra. Then, there is a polynomial time algorithm that computes an enumerated compact representations of $\Sg_{\algA^k}(\mathbf a_1,\ldots,\mathbf a_n)$, for given $\mathbf a_1,\ldots,\mathbf a_n \in A^k$.
\end{theorem}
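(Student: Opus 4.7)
The plan is to use the ``bounded weight'' characterisation of supernilpotence given in Theorem~\ref{theorem:supernilpotent}. Let $d$ be such that $\algA$ is $d$-supernilpotent. Since higher commutators are well-behaved under products, $\algA^k$ is also $d$-supernilpotent, so with respect to any fixed constant tuple $\mathbf 0 \in A^k$ every term operation of $\algA^k$ is determined by its values on tuples of weight at most $d$. This should imply that only polynomially many ``essentially distinct'' applications of terms to the inputs $\mathbf a_1,\ldots,\mathbf a_n$ need be considered.

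Concretely, I would fix $0 \in A$, adjoin $\mathbf 0 \in A^k$ to the input list, and compute a compact representation of $\mathbf R' := \Sg_{\algA^k}(\mathbf 0, \mathbf a_1, \ldots, \mathbf a_n)$, from which the representation of $\mathbf R = \Sg_{\algA^k}(\mathbf a_1,\ldots,\mathbf a_n)$ follows by a routine post-processing step. For each $I \subseteq \{1,\ldots,n\}$ with $|I|\leq d$ and each $|I|$-ary term $t$ of $\algA$, I compute the tuple $t^{\algA^k}(\mathbf a^I)$, where $\mathbf a^I \in (A^k)^n$ has $\mathbf a_i$ in positions $i\in I$ and $\mathbf 0$ elsewhere, together with the circuit that evaluates $t$ on these arguments. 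Since $\algA$ has at most $|A|^{|A|^d}$ term operations of each arity $\leq d$ (a constant depending only on $\algA$), the resulting set $G$ has polynomial size $O(n^d)$ and each element is annotated by a constant-size circuit.

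The central claim is that the Mal'tsev closure of $G$ equals $\mathbf R'$, equivalently $\Sig(G) = \Sig(\mathbf R')$. Granted this, the signature-based thinning procedure recalled in Section~\ref{sect:prelim} extracts from $G$ a compact representation $S \subseteq G$ of size at most $2nk|A|^2$ in polynomial time; every $\mathbf c \in S$ inherits its circuit from $G$, which (after the post-processing to pass from $\mathbf R'$ to $\mathbf R$) yields the sought enumerated compact representation.

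The main obstacle is the central claim. It should follow from an inclusion--exclusion style expansion: any term value $s^{\algA^k}(\mathbf a_1,\ldots,\mathbf a_n)$ can be rewritten as an iterated Mal'tsev combination of restricted values $s^{\algA^k}(\mathbf a^I)$ for $|I|\leq d$, which lie in $G$ up to choice of representing term. Such an expansion is easy to exhibit in the Abelian case (where Mal'tsev is $x-y+z$ and one recovers $\sum_i \alpha_i \mathbf a_i + \mathbf c$ from the values $\alpha_i \mathbf a_i + \mathbf c$ and $\mathbf c$), but in general one has to build it inductively using the term condition witnessing $d$-supernilpotence. Verifying that the induced Mal'tsev word has polynomial length, so that the inherited circuits remain polynomial under thinning, is the technical heart of the argument and corresponds to the construction of Mayr in~\cite{mayr-SMP}.
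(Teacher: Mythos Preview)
The paper does not prove this from scratch: it observes that Mayr's \emph{group representation} $(T_1,\ldots,T_k)$ of $\Sg_{\algA^k}(\mathbf a_1,\ldots,\mathbf a_n)$, combined as $T_1+q\cdot T_2+\cdots+q\cdot T_k$ for a suitable constant $q$, already has the full signature of the generated relation (\cite[Lemma~3.1]{mayr-SMP}), and hence---together with the defining circuits produced along the way---is the desired enumerated compact representation. Your route is genuinely different, but it has a gap beyond the deferred ``central claim''.

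The asserted equivalence ``the Mal'tsev closure of $G$ equals $\mathbf R'$'' $\Leftrightarrow$ ``$\Sig(G)=\Sig(\mathbf R')$'' fails in the direction you use: a Mal'tsev generating set need not witness all forks. Already for $\algA=\Z_2$ (so $d=1$) with $\mathbf a_1=(1,0,0)$, $\mathbf a_2=(0,1,0)$, $\mathbf a_3=(1,1,1)$ and $\mathbf 0=(0,0,0)$, your set is $G=\{\mathbf 0,\mathbf a_1,\mathbf a_2,\mathbf a_3\}$; its Mal'tsev closure is all of $\mathbf R'=\Z_2^3$, yet no two of its elements agree on the first two coordinates, so $\Sig(G)$ contains no nontrivial fork at position~$3$ while $\Sig(\mathbf R')$ does. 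Thus signature-thinning $G$ does \emph{not} give a compact representation of $\mathbf R'$, even granting your central claim. What Mayr's construction actually delivers---and what you cannot extract from Theorem~\ref{theorem:supernilpotent} plus an inclusion--exclusion identity alone---is a polynomial-size subset with the correct \emph{signature}, not merely a Mal'tsev generating set. (A second, smaller issue: passing from $\mathbf R'=\Sg(\mathbf 0,\mathbf a_1,\ldots,\mathbf a_n)$ back to $\mathbf R$ is not routine either, but that one is avoidable by padding with $\mathbf a_1$ instead of an external $\mathbf 0$, since Theorem~\ref{theorem:supernilpotent} holds for any choice of constant.)
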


Theorem \ref{theorem:SMPsupernilpotent} can be seen as a generalization of Gaussian elimination from affine to supernilpotent algebras. We remark that Theorem \ref{theorem:SMPsupernilpotent}, although not explicitly stated as such in \cite{mayr-SMP}, follows directly from the algorithm computing a \emph{group representations} $(T_1,T_2,\ldots,T_k)$ of $\Sg_{\algA^k}(\mathbf a_1,\ldots,\mathbf a_n)$ and the fact that for such a group representation, there is a constant $q$ such that $T = (T_1+q\cdot T_2+\cdots +q\cdot T_k)$ has the same signature as $\Sg_{\algA^k}(\mathbf a_1,\ldots,\mathbf a_n)$ (see Lemma 3.1. in \cite{mayr-SMP}). Thus, $T$ together with its defining circuits forms an enumerated compact representation of $\Sg_{\algA^k}(\mathbf a_1,\ldots,\mathbf a_n)$.

We are furthermore going to use that there is an algorithm that allows us to fix some values of a relation given by enumerated compact representation:

\begin{lemma} \label{lemma:fixvalues}
Let $\algA$ be a Mal'tsev algebra. Then, there is a polynomial-time algorithm \texttt{Fix-values}$(R,a_1,\ldots,a_m)$ that, for a given compact representation $R$ of $\mathbf R = \Sg_{\algA^k}(X)$, and constants $a_1,\ldots,a_m \in A$, returns a compact representation $R'$ of $\{ \mathbf x \in \mathbf R \mid \mathbf x(1) = a_1, \ldots, \mathbf x(m) = a_m\}$ (or $\emptyset$ if the relation is empty). If $R$ is moreover enumerated then \texttt{Fix-values} also computes polynomial size circuits defining the elements of $R'$ from $X$.
\end{lemma}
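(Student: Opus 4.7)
The plan is to reduce to the case $m = 1$ by iteration: applying a subroutine that fixes a single coordinate value $m$ times (to the successive outputs, with coordinate reindexing) handles general $m$. The composed running time stays polynomial as long as the subroutine does and the intermediate compact representations stay of polynomial size.

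For the single-value case -- fixing $\mathbf x(1) = a_1$ -- let $\mathbf R' := \mathbf R \cap \{\mathbf x(1)=a_1\}$. My plan is to build a set $R' \subseteq \mathbf R'$ with $\Sig(R') = \Sig(\mathbf R')$ and $|R'|$ polynomial; after trimming redundant tuples per signature entry, this gives a compact representation of $\mathbf R'$. Feasibility ($\mathbf R' \neq \emptyset$) is immediate: it holds iff some $\mathbf r \in R$ has $\mathbf r(1) = a_1$, which is readable from $\Sig(R) = \Sig(\mathbf R)$. Assuming feasibility, I construct $R'$ coordinate by coordinate: for each $i = 2, \ldots, k$ and each pair $(c, d) \in A^2$, I try to produce a witness pair for $(i, c, d)$ in $\mathbf R'$ by taking an existing $\mathbf u \in R'$ and attempting to extend the prefix $(\mathbf u(1), \ldots, \mathbf u(i-1))$ to two tuples in $\mathbf R$ with values $c$ and $d$ at coordinate $i$. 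Such partial-extension queries on $\mathbf R$ are solved by the greedy Mal'tsev reconstruction recalled earlier in the excerpt, applied to partial prefixes rather than fully specified tuples; each query runs in polynomial time in $|R|$.

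The main obstacle is justifying that the prefixes drawn from the current $R'$ are sufficient to realise every signature entry of $\mathbf R'$. This is established by an induction on $i$ that mirrors the coordinate-by-coordinate propagation invariant of the Bulatov--Dalmau Mal'tsev algorithm: once $R'$ has the right signature on coordinates $< i$, any witness pair for $(i, c, d) \in \Sig(\mathbf R')$ can be matched by an extension of some prefix already represented in $R'$. In particular, $|R'|$ grows by at most $O(|A|^2)$ per coordinate, remains of size $O(k |A|^2)$, and the total running time is polynomial.

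For the enumerated version, every tuple added to $R'$ is produced by a polynomial-depth chain of Mal'tsev combinations of tuples of $R$; substituting the polynomial-size circuits attached to those tuples yields a polynomial-size circuit expressing each new tuple in terms of the original generators $X$, as required.
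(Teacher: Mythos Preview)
Your reduction to fixing one coordinate at a time and the circuit bookkeeping are fine and match the paper. The gap is in the inductive step: the invariant ``the running $R'$ has the correct signature on coordinates $< i$'' is \emph{not} strong enough to guarantee that every $(i,c,d) \in \Sig(\mathbf R')$ can be witnessed by extending the length-$(i-1)$ prefix of some $\mathbf u$ already in $R'$. For a concrete failure take $\algA = (\Z_3,\,x-y+z)$ and $\mathbf R = \mathbf R' = \{(0,x,y,x+y) : x,y \in \Z_3\}$ with $a_1=0$. With the compact representation $R=\{(0,0,0,0),(0,1,2,0),(0,2,1,0),(0,0,1,1),(0,0,2,2)\}$ and suitable (legal) choices of $\mathbf u$ at step~$3$, the running set after processing coordinates $1,2,3$ is
\[
\{(0,0,0,0),(0,1,2,0),(0,2,1,0),(0,0,1,1),(0,1,0,1),(0,2,2,1)\}.
\]
This set has the full signature on coordinates $1$--$3$, yet every level-$3$ prefix $(0,x,y)$ it contains satisfies $x+y\in\{0,1\}$. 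At step~$4$ no $\mathbf u$ in the running set has a prefix extending to the value $2$, so $(4,2,2)\in\Sig(\mathbf R')$ is never witnessed. Your invariant holds, but the step fails; the Bulatov--Dalmau propagation you invoke does not give the claim without a stronger invariant or a specified rule for choosing $\mathbf u$.

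The paper's algorithm avoids any prefix-matching induction. For each fork $(j,b,c) \in \Sig(R)$ it only seeks some $\mathbf t \in \mathbf R$ with $\mathbf t(1)=a$ and $\mathbf t(j)=b$; existence is decided by closing the two-coordinate projection $\proj_{1,j}(R)$ under $m$ (a set of size at most $|A|^2$). Setting $\mathbf s := m(\mathbf t, \mathbf r_b, \mathbf r_c)$, where $\mathbf r_b,\mathbf r_c \in R$ are the given witnesses of $(j,b,c)$, one gets $\mathbf s(1)=a$, $\mathbf s(j)=c$, and $\mathbf s$ agrees with $\mathbf t$ on the first $j-1$ coordinates (since $\mathbf r_b,\mathbf r_c$ agree there). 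Thus $(\mathbf t,\mathbf s)$ is the desired witness pair in $\mathbf R'$, obtained directly without maintaining any coordinate-by-coordinate invariant.
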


The existence of such a \texttt{Fix-values} algorithm for compact representation is a well-known result (\cite{BD-Maltsev}, see also \cite[Algorithm 5]{brady-CSPnotes}); the additional statement about \emph{enumerated} compact representation follows easily from bookkeeping the defining circuits. We prove Lemma \ref{lemma:fixvalues} in Appendix \ref{appendix:fixval}.

\section{Wreath products and difference clonoids} \label{sect:diffclon}
In this section, we discuss how to represent Mal'tsev algebras with non-trivial center by a so-called \emph{wreath product} $\algL \otimes \algU$, and associate to it its \emph{difference clonoid}, which gives us a measure on how far it is from being the direct product $\algL\times \algU$.


 \begin{definition}
 Let $\algU = (U,(f^{\algU})_{f\in F})$ and $\algL = (L,(f^{\algL})_{f\in F})$ be two algebras in the same language $F$, such that $\algL$ is affine. Furthermore, let $0 \in L$ and $T = (\hat f)_{f\in F}$ be a family of operations $\hat f \colon U^n\to L$, for each $f\in F$ of arity $n$. Then we define the \emph{wreath product} $\algL \otimes^{T,0} \algU$ as the algebra $(L\times U,(f^{\algL \otimes^T \algU})_{f\in F})$ with basic operations
 $$f^{\algL \otimes^{T,0} \algU}((l_1,u_1),\ldots,(l_n,u_n)) = (f^{\algL}(l_1,\ldots,l_n)+\hat f(u_1,\ldots,u_n),f^{\algU}(u_1,\ldots,u_n)),$$
(where $+$ is the addition on $\algL$ with respect to neutral element $0$). For simplicity, we are going to write $\algL \otimes \algU$, if $T$ and $0$ are clear from the context.
\end{definition}

The name \emph{wreath product} refers to the fact that this is a special case of VanderWerf's wreath products \cite{vanderwerf-wreathproduct}. We remark that recently also alternative names for $\algL \otimes \algU$ were suggested, such as \emph{central extension} (by Mayr) and \emph{semidirect product} (by Zhuk). By a result of Freese and McKenzie we can represent Mal'tsev algebras with non-trivial centers as wreath products: 

\begin{theorem}[Proposition 7.1.  in \cite{FM-commutator-theory}] \label{theorem:wreath}
Let $\algA$ be a Mal'tsev algebra with a central congruence $\alpha$, and let $\algU =\algA/\alpha$.  Then there is an affine algebra $\algL$,  an element $0 \in L$ and a set of operations $T$, such that $\algA \cong \algL \otimes^{T,0} \algU$.
\end{theorem}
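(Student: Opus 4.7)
My plan is to construct the wreath product datum $(\algL, T, 0)$ together with an explicit isomorphism $\phi \colon \algA \to \algL \otimes^{T,0} \algU$ by means of a section of the quotient map $\pi \colon \algA \to \algU = \algA/\alpha$. First I fix an arbitrary $0 \in A$, set $L := [0]_{\alpha}$, and choose a transversal $s \colon U \to A$ with $\pi \circ s = \id_U$ and $s(\pi(0)) = 0$. I then define the \emph{difference map} $d \colon A \to L$ by $d(a) := m^{\algA}(a,\, s(\pi(a)),\, 0)$, which indeed lands in $L$ because $a \sim_{\alpha} s(\pi(a))$ forces $d(a) \sim_{\alpha} m^{\algA}(s(\pi(a)),s(\pi(a)),0) = 0$.

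Next, I equip $L$ with an affine algebra structure $\algL$ in the same language as $\algA$. Centrality of $\alpha$, i.e.\ $C(\alpha, 1_{\algA}; 0_{\algA})$, is well known to imply that $l_1 + l_2 := m^{\algA}(l_1, 0, l_2)$ and $-l := m^{\algA}(0, l, 0)$ endow $(L, +, -, 0)$ with the structure of an Abelian group, with $m^{\algA}$ restricted to $L$ equal to $l_1 - l_2 + l_3$. For each basic $n$-ary symbol $f \in F$ I set
\[
f^{\algL}(l_1,\dots,l_n) := d\bigl(f^{\algA}(l_1,\dots,l_n)\bigr), \qquad
\hat f(u_1,\dots,u_n) := d\bigl(f^{\algA}(s(u_1),\dots,s(u_n))\bigr),
\]
both living in $L$, and let $T := (\hat f)_{f \in F}$. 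Affineness of $\algL$ follows either by checking directly from centrality that every term operation of $\algL$ is linear in $(L,+)$, or by verifying that $\algL$ is Abelian and invoking Herrmann's theorem.

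The candidate isomorphism is $\phi(a) := (d(a), \pi(a))$, with inverse $(l,u) \mapsto m^{\algA}(l, 0, s(u))$; bijectivity is a direct computation using the Mal'tsev identities together with one invocation of the term condition. What remains, and forms the crux of the theorem, is to verify that $\phi$ is a homomorphism. The second coordinate of $\phi(f^{\algA}(a_1,\dots,a_n))$ matches that of $f^{\algL \otimes \algU}(\phi(a_1),\dots,\phi(a_n))$ because $\pi$ is a homomorphism onto $\algU$. On the first coordinate, the homomorphism property boils down, for every $f \in F$ and all $a_1,\dots,a_n \in A$, to the identity
\[
d\bigl(f^{\algA}(a_1,\dots,a_n)\bigr) \;=\; f^{\algL}\bigl(d(a_1),\dots,d(a_n)\bigr) \,+\, \hat f\bigl(\pi(a_1),\dots,\pi(a_n)\bigr).
\]

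Proving this last identity is the main obstacle. Using $a_i = m^{\algA}(d(a_i), 0, s(\pi(a_i)))$ (which is the formula for $\phi^{-1}$), I substitute into $f^{\algA}(a_1,\dots,a_n)$ and then invoke the term condition $C(\alpha, 1_{\algA}; 0_{\algA})$ coordinate by coordinate: since each $d(a_i)$ is $\alpha$-related to $0$, centrality lets me swap the $L$-inputs against $0$ without breaking equalities in $\algA$, which cleanly separates the $U$-part (recovered by setting all $d(a_i) = 0$, giving $\hat f(\pi(a_1),\dots,\pi(a_n))$) from the $L$-part. The remaining dependence on the $d(a_i)$'s is linear in $(L, +)$ by Abelianness of $\algL$ and collapses to $f^{\algL}(d(a_1),\dots,d(a_n))$. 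Assembling the steps yields the desired isomorphism $\algA \cong \algL \otimes^{T,0} \algU$.
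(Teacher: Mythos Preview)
The paper does not give its own proof of this statement; it is quoted as Proposition~7.1 of Freese--McKenzie, so there is nothing to compare against beyond the classical construction. Your sketch follows that construction faithfully: pick a transversal $s$ of $\alpha$, push every element into the block $L=[0]_\alpha$ via $d(a)=m^{\algA}(a,s(\pi(a)),0)$, and read off $\algL$, $T$, and the isomorphism $\phi=(d,\pi)$.

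There is, however, one concrete slip that makes your key identity fail as written. With your choices $f^{\algL}(\mathbf l)=d(f^{\algA}(\mathbf l))$ and $\hat f(\mathbf u)=d(f^{\algA}(s(\mathbf u)))$, evaluate the claimed equation $d(f^{\algA}(\mathbf a))=f^{\algL}(d(\mathbf a))+\hat f(\pi(\mathbf a))$ at $\mathbf a=(0,\dots,0)$: since $d(0)=0$ and $s(\pi(0))=0$, both summands on the right equal the same constant $c_f:=d(f^{\algA}(0,\dots,0))$, so the right-hand side is $2c_f$ while the left is $c_f$. Unless every basic operation happens to satisfy $f^{\algA}(0,\dots,0)=s(\pi(f^{\algA}(0,\dots,0)))$, this is false and $\phi$ is not a homomorphism. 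The repair is to subtract $c_f$ from exactly one of the two definitions, e.g.\ put $f^{\algL}(\mathbf l):=d(f^{\algA}(\mathbf l))-c_f$; this also yields the normalization $f^{\algL}(0,\dots,0)=0$ that the paper relies on immediately afterwards. With that correction the remainder of your outline (bijectivity via one application of the term condition, and the separation of the $L$- and $U$-parts via centrality) is the standard argument and goes through.
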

 
Note that, for a fixed quotient $\algU = \algA/\alpha$, there is still some freedom in how to choose the operations $f^\algL$ of $\algL$, and the operations $\hat f \colon U^n \to L$ in $T$ (by adding/subtracting constants). To get rid of this problem, we are from now on always going to assume that $\algL$ preserves $0$, i.e. $f^{\algL}(0,0,\ldots,0) = 0$ for all $f\in F$.   It is then easy to observe that wreath products $\algL \otimes^{0,T} \algU$ behaves nicely with respect to the direct product $\algL \times \algU$ in the same language:

\begin{observation} \label{observation:companion}
Let $\algA$ be a Mal'tsev algebra with wreath product representation $\algA = \algL \otimes^{0,T} \algU$. Then $t^{\algA} = s^{\algA} \Rightarrow t^{\algL\times \algU} = s^{\algL\times \algU}$.
\end{observation}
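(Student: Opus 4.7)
The strategy is to establish the following decomposition lemma as the main step: for every $k$-ary term $t$ in the common language $F$ of $\algA, \algL, \algU$, there exists a map $\hat t \colon U^k \to L$ such that
\[
t^{\algA}((l_1, u_1), \ldots, (l_k, u_k)) = \bigl(t^{\algL}(l_1, \ldots, l_k) + \hat t(u_1, \ldots, u_k),\; t^{\algU}(u_1, \ldots, u_k)\bigr)
\]
for all $(l_i, u_i) \in L \times U$. Once this decomposition is in place, the observation follows almost immediately by evaluating the $L$-coordinate equation at $l_1 = \cdots = l_k = 0$.

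I would prove the decomposition by induction on the structure of $t$. If $t = x_i$ is a projection, the identity holds with $\hat t \equiv 0$. For $t = f(t_1, \ldots, t_n)$, I would expand $t^{\algA}$ using the wreath product formula together with the inductive hypothesis applied to each $t_j$. The key algebraic fact is that because $\algL$ is affine \emph{and} every basic operation $f^{\algL}$ preserves $0$ (the normalization assumption made just before the observation), each $f^{\algL}$ is in fact linear: $f^{\algL}(a_1 + b_1, \ldots, a_n + b_n) = f^{\algL}(a_1,\ldots,a_n) + f^{\algL}(b_1,\ldots,b_n)$. This linearity lets us split $f^{\algL}$ applied to the sums $t_j^{\algL}(\mathbf l) + \hat{t_j}(\mathbf u)$ into an $\mathbf l$-part (which assembles into $t^{\algL}(\mathbf l)$) and a $\mathbf u$-part (which, combined with the twist $\hat f(t_1^{\algU}(\mathbf u), \ldots, t_n^{\algU}(\mathbf u))$ coming from the wreath product, assembles into $\hat t(\mathbf u)$). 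The same induction also yields $t^{\algL}(0, \ldots, 0) = 0$ for every term $t$.

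With the decomposition established, suppose $t^{\algA} = s^{\algA}$. Comparing second coordinates immediately gives $t^{\algU} = s^{\algU}$. Evaluating the first-coordinate equality at $l_1 = \cdots = l_k = 0$ and using $t^{\algL}(0,\ldots,0) = s^{\algL}(0,\ldots,0) = 0$ yields $\hat t = \hat s$, and substituting this back into the same equality gives $t^{\algL} = s^{\algL}$. Combining both coordinate-wise equalities with the definition of $\algL \times \algU$ produces $t^{\algL \times \algU} = s^{\algL \times \algU}$, as desired.

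The only non-routine step is the inductive case of the decomposition lemma; everything else is essentially bookkeeping. The mild obstacle there is to make sure that genuine \emph{linearity} of $f^{\algL}$ (rather than mere affineness) is available, which is exactly what the standing assumption $f^{\algL}(0,\ldots,0)=0$ secures.
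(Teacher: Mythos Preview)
Your proof is correct and follows essentially the same approach as the paper: establish the decomposition $t^{\algA} = (t^{\algL}(\mathbf l) + \hat t(\mathbf u), t^{\algU}(\mathbf u))$ by induction on terms, then use the $0$-preserving assumption on $\algL$ to deduce $t^{\algL}=s^{\algL}$ and $\hat t = \hat s$. The only cosmetic difference is in the last step, where the paper observes that $t^{\algL}-s^{\algL}$ and $\hat s - \hat t$ must equal the same constant $c$ (one depends only on $\mathbf l$, the other only on $\mathbf u$) and then argues $c=0$, whereas you evaluate directly at $\mathbf l = \mathbf 0$; these are equivalent.
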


\begin{proof}
Note that, for every term $t$ in the language of $\algA$:
$$t^{\algA}((l_1,u_1),\ldots,(l_n,u_n)) = (t^{\algL}(l_1,\ldots,l_n)+\hat t(u_1,\ldots,u_n),t^{\algU}(u_1,\ldots,u_n)),$$
for some $\hat t \colon U^n \to L$ (this can be shown by induction over the height of the term tree). Clearly $t^{\algA} = s^{\algA}$ implies $t^{\algU}=s^{\algU}$, and $t^{\algL} - s^{\algL} = c$, $\hat t - \hat s = -c$ for some constant $c\in L$. Since, by our assumptions, the operations of $\algL$ preserve $0$, we get $t^{\algL} = s^{\algL}$ and $\hat t = \hat s$. Thus $t^{\algL\times \algU} = s^{\algL\times \algU}$.
\end{proof}

In other terminology, the map $t^{\algA} \mapsto t^{\algL \times \algU}$ is a surjective \emph{clone homomorphism} from $\Clo(\algA)$ to $\Clo(\algL\times\algU)$, i.e. a map that preserves arities, projections and compositions. The converse of Observation \ref{observation:companion} does however not hold, since this map is usually not injective. We define the \emph{difference clonoid} $\Diff_0(\algA)$ as the kernel of the clone homomorphisms in the following sense:

\begin{definition} \label{definition:diffclonoid}
Let $\algA = \algL \otimes^{0,T} \algU$ be a Mal'tsev algebra given as a wreath product. 
\begin{bracketenumerate}
    \item We define the equivalence relation $\sim$ on $\Clo(\algA)$ by $$t^{\algA}\sim s^{\algA} :\Leftrightarrow t^{\algL\times \algU} = s^{\algL\times \algU}$$
    \item the \emph{difference clonoid $\Diff_0(\algA)$} is defined as the set of all operation $\hat r \colon U^n \to L$, such that there are $t^{\algA} \sim s^{\algA} \in \Clo(\algA)$ with:
\end{bracketenumerate} 
\begin{align}
\label{eq7} t^{\algA}((l_1,u_1),\ldots,(l_n,u_n)) &= (t^{\algL}(\mathbf l) + \hat t(\mathbf u), t^{\algU}(\mathbf u))\\
\label{eq8} s^{\algA}((l_1,u_1),\ldots,(l_n,u_n)) &= (t^{\algL}(\mathbf l) + \hat t(\mathbf u) + \hat r(\mathbf u), t^{\algU}(\mathbf u))
\end{align}
\end{definition}

\begin{notation} \label{notation:diff}
In the following,  we will stick to the following convention: Function symbols with a hat will always denote operations from some power of $U$ to $L$. For operations $t,s\colon A^n \to A$, and $\hat r \colon U^n \to L$ such as in (\ref{eq7}) and (\ref{eq8}) we are slightly going to abuse notation, and write $s = t + \hat r$ and $\hat r = (s-t)$.
\end{notation}

We next show that $\Diff_0(\algA)$ is indeed a clonoid from $\algU$ to $\algL$ (extended by the constant $0$).

\begin{lemma} \label{lemma:difference} \,
Let $\algA = \algL \otimes^{0,T} \algU$ be a Mal'tsev algebra given as wreath product. Then:
\begin{bracketenumerate}
\item For all $t \in \Clo(\algA)$, $\hat r \in \Diff_0(\algA)$ also $t+\hat r \in \Clo(\algA)$,
\item $\Diff_0(\algA)$ is a $(\algU,(\algL,0))$-clonoid.
\end{bracketenumerate}
\end{lemma}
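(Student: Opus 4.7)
\emph{Proof plan.}
The plan for both parts is to exploit the witnesses $t'\sim s' = t'+\hat r$ guaranteed by $\hat r \in \Diff_0(\algA)$, composing them either with the Mal'tsev term of $\algA$ (for (1)) or with a lifted outer/inner operation (for (2)). Before starting, I record three auxiliary facts about the representation $\algA = \algL \otimes^{T,0}\algU$ that I will use repeatedly. First, since the clone homomorphism of Observation \ref{observation:companion} preserves identities, $m^{\algL}$ must itself be a Mal'tsev operation, and because $\algL$ is affine this forces $m^{\algL}(x,y,z)=x-y+z$. Second, applying the identities $m(x,x,y)=y=m(y,x,x)$ to the wreath product formula and using $m^{\algL}(0,0,0)=0$, one obtains $\hat m(x,x,y)=\hat m(y,x,x)=0$. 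Third, each basic operation $f^{\algL}$ is affine with $f^{\algL}(0,\ldots,0)=0$, hence it is linear on the underlying module, so $f^{\algL}(\mathbf x+\mathbf y)=f^{\algL}(\mathbf x)+f^{\algL}(\mathbf y)$.

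For part (1), given $n$-ary $t \in \Clo(\algA)$ and $\hat r\in\Diff_0(\algA)^{(n)}$ with witnesses $t'\sim s' = t' + \hat r$, I would take $q := m(s',t',t) \in \Clo(\algA)$. Expanding $q^{\algA}$ through the wreath product formula, the $\algU$-coordinate collapses to $t^{\algU}(\mathbf u)$ because $m^{\algU}$ is Mal'tsev, while the $\algL$-coordinate collapses to $t^{\algL}(\mathbf l)+\hat t(\mathbf u)+\hat r(\mathbf u)$: the correction $\hat m(t'^{\algU}(\mathbf u),t'^{\algU}(\mathbf u),t^{\algU}(\mathbf u))$ vanishes by the first two arguments coinciding, and the main contribution simplifies thanks to $m^{\algL}(x,y,z)=x-y+z$. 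Hence $q = t+\hat r$ in the sense of Notation \ref{notation:diff}, showing $t+\hat r \in \Clo(\algA)$.

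For part (2), I verify the two closure properties of a $(\algU,(\algL,0))$-clonoid. For inside composition with $u_1,\ldots,u_n\in \Clo(\algU)^{(k)}$, I lift each $u_i$ to $v_i \in \Clo(\algA)^{(k)}$ using the same term in the shared signature and consider $t'(v_1,\ldots,v_n)$ and $s'(v_1,\ldots,v_n)$ in $\Clo(\algA)^{(k)}$; these remain $\sim$-equivalent since $\sim$ is a clone congruence, and a direct expansion shows their $\algL$-difference equals $\hat r(u_1(\mathbf u),\ldots,u_n(\mathbf u))$, placing $\hat r\circ(u_1,\ldots,u_n)$ in $\Diff_0(\algA)$. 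For outside composition with $\Clo((\algL,0))$, a standard induction on term complexity reduces to two generators: the constant $0$ lies in $\Diff_0(\algA)$ trivially (take $t'=s'=\pi_1$); and for an $m$-ary basic $f \in F$ together with $\hat r_j$ witnessed by $t_j'\sim s_j'$, I compare $p := f(t_1',\ldots,t_m')$ with $q := f(s_1',\ldots,s_m')$ in $\Clo(\algA)$, whose $\algL$-difference simplifies to $f^{\algL}(\hat r_1(\mathbf u),\ldots,\hat r_m(\mathbf u))$ by the linearity of $f^{\algL}$.

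The main technical obstacle in either part is the bookkeeping of the auxiliary maps $\hat m,\hat f,\hat t,\hat{t'},\ldots$ appearing when expanding compositions in the wreath product. Their clean cancellation rests entirely on the three facts recorded in the opening paragraph, all of which hinge on the normalization $f^{\algL}(0,\ldots,0) = 0$. Once these identities are in place, each step is essentially a direct calculation.
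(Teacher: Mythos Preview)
Your proposal is correct and follows essentially the same approach as the paper: part (1) via the Mal'tsev term applied to the witnesses (the paper writes $m(t,s_2,s_1)$, you write $m(s',t',t)$), and part (2) by checking inside closure via substitution and outside closure by comparing composed witnesses. The only cosmetic difference is that for the outside closure the paper reduces to $+$ (handled using (1)) and unary terms, whereas you reduce directly to the basic operations $f^{\algL}$ using their linearity; both reductions work for the same reason, namely that $\algL$ is affine and preserves $0$.
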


\begin{proof}
To prove (1), let $t \in \Clo(\algA)$ and $\hat r \in \Diff_0(\algA)$. By definition of the difference clonoid, $\hat r = s_1 -s_2$ for two terms $s_1, s_2 \in \Clo(\algA)$, with $s_1 \sim s_2$. In particular, $s_1^{\algU} = s_2^{\algU}$. For any Mal'tsev term $m \in \Clo(\algA)$, necessarily $\hat m(u,u,v) = \hat m(v,u,u) = 0$ holds. This implies that
$$t+\hat r = m(t,s_2,s_1) \in\Clo(\algA).$$

We next prove (2). So we only need to verify that $\Diff_0(\algA)$  is closed under composition with $\Clo(\algU)$ (from the inside), respectively $\Clo((\algL,0))$ (from the outside).

To see that $\Diff_0(\algA)$ is closed under $(\algL,0)$, note that $0 \in \Diff_0(\algA)$, as $t-t = 0$, for every term $t\in \Clo(\algA)$. Further $(\algL,0)$ is closed under $+$; for this, let $\hat r_1,\hat r_2 \in \Diff_0(\algA)$. By (1), we know that $t+\hat r_1 \in \Clo(\algA)$, for some term $t\in \Clo(\algA)$. Again, by (1) also $(t+\hat r_1) + \hat r_2)) \in \Clo(\algA)$, which shows that $\hat r_1+\hat r_2 \in \Diff_0(\algA)$. For all unary $e^{\algL} \in \Clo(\algL)$, and $t\sim s$ with $\hat r = t-s$, note that $e^{\algA} t-e^{\algA} s = e^{\algL} \circ \hat r \in \Diff_0(\algA)$. Since $\algL$ is affine, $\Clo(\algL,0)$ is generated by $+$ and its unary terms, thus $\Diff_0(\algA)$ is closed under $(\algL,0)$.

To see that $\Diff_0(\algA)$ is closed under $\algU$ from the inside, simply notice that $t(x_1,\ldots,x_n) \sim s(x_1,\ldots,x_n)$ implies $t(f_1(\mathbf x),\ldots,f_n(\mathbf x)) \sim s(f_1(\mathbf x),\ldots,f_n(\mathbf x))$, for all terms $f_1,\ldots,f_n$. If $\hat r = t^{\algA}-s^{\algA}$, then $\hat r \circ (f_1^{\algU},\ldots,f_n^{\algU}) = t\circ (f_1^{\algU},\ldots,f_n^{\algU})- s\circ (f_1^{\algU},\ldots,f_n^{\algU}) \in \Diff_0(\algA)$.
\end{proof}

We remark that the choice of the constant $0 \in L$ is not relevant in this construction: since for every $c \in L$ the map $\hat r \mapsto \hat r + c$ is an isomorphism between the $(\algU,(\algL,0))$-clonoid $\Diff_0(\algA)$ and the $(\algU,(\algL',c))$-clonoid $\Diff_c(\algA)$ (where $f^{\algL'}(\mathbf l) = f^{\algL}(\mathbf l-(c,c\ldots,c))+c$).

Our goal in the next section is to reduce the subpower membership problem to a version of the subpower membership problem for the difference clonoid in which we ask for membership of a tuple $\mathbf l \in L^k$ in the subalgebra of $\algL$ given by the image of $\mathbf u_1,\ldots,\mathbf u_n \in U^k$ under the clonoid.  In fact,  it will be more convenient for us to ask for a compact representation, that's why we define the following problem, for a clonoid $\mathcal C$ from $\algU$ to $\algL$.\\

\noindent $\CompRep(\mathcal C)$:\\
\textsc{Input:} A list of tuples $\mathbf u_1,\ldots,\mathbf u_n \in U^k$.\\
\textsc{Output:} A compact representation of $\mathcal C(\mathbf u_1,\ldots,\mathbf u_n) = \{f(\mathbf u_1,\ldots,\mathbf u_n) \mid f \in \mathcal C \} \leq \algL^{k} $\\

In the case of the difference clonoid $\mathcal C = \Diff_0(\algA)$ the image algebra $\algL$ is affine and contains a constant $0$. Thus then this problem is then equivalent to finding generating set of $\mathcal C(\mathbf u_1,\ldots,\mathbf u_n)$ as a subgroup of $(L,+,0,-)^k$ of polynomial size. By then running Gaussian elimination (generalized to finite Abelian groups), or simply applying Theorem \ref{theorem:SMPsupernilpotent} one can then compute a compact representation of $\mathcal C(\mathbf u_1,\ldots,\mathbf u_n)$.

\section{The subpower membership problem of wreath products} \label{sect:reduction}
In this section we discuss our main methodological results. We show that, in some cases the subpower membership problem $\SMP(\algL \otimes \algU)$ of a wreath product can be reduced to $\CompRep(\algL \times \algU)$ and $\CompRep(\mathcal C)$.  We first show how such a reduction can be achieved relatively easily in the case where $\Clo(\algL \otimes \algU)$ contains $\Clo(\algL \times \algU)$ (i.e. the identity map is a retraction of the clone homomorphism from Observation \ref{observation:companion}):

\begin{theorem} \label{theorem:directprod}
Let $\algA = \algL \otimes^{(0,T)} \algU$ be a finite Mal'tsev algebra, and let $\mathcal C = \Diff_0(\algA)$. Further assume that $\Clo(\algL \times \algU) \subseteq \Clo(\algA)$. Then $\CompRep(\algA)$ (and hence also $\SMP(\algA)$) reduces in polynomial time to $\CompRep(\algL\times\algU)$ and $\CompRep(\mathcal C)$. 
\end{theorem}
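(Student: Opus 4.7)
The plan is to use the hypothesis $\Clo(\algL\times\algU)\subseteq\Clo(\algA)$ to view $\mathbf R:=\Sg_{\algA^k}(\mathbf a_1,\ldots,\mathbf a_n)$ simultaneously as a subpower of $\algL\times\algU$, produce a short list of generators of $\mathbf R$ inside $(\algL\times\algU)^k$, and then call $\CompRep(\algL\times\algU)$ exactly once on that list. Writing $\mathbf a_i=(\mathbf l_i,\mathbf u_i)$ and $\mathbf S:=\Sg_{(\algL\times\algU)^k}(\mathbf a_1,\ldots,\mathbf a_n)$, the first thing I would establish is the structural identity
\begin{align*}
\mathbf R \;=\; \{(\mathbf l+\mathbf c,\mathbf u) : (\mathbf l,\mathbf u)\in\mathbf S,\ \mathbf c\in\mathcal C(\mathbf u_1,\ldots,\mathbf u_n)\},
\end{align*}
where $\mathcal C=\Diff_0(\algA)$. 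For ``$\subseteq$'' I factor each $t\in\Clo(\algA)$ as its ``direct-product shadow'' (an element of $\Clo(\algL\times\algU)\subseteq\Clo(\algA)$) plus a $\Diff_0(\algA)$-correction, using Definition \ref{definition:diffclonoid}; the converse is immediate from Lemma \ref{lemma:difference}(1).

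The algorithm then has two stages. First, call $\CompRep(\mathcal C)$ on $(\mathbf u_1,\ldots,\mathbf u_n)$ to obtain a compact representation $T=\{\mathbf c_1,\ldots,\mathbf c_q\}$ of $\mathcal C(\mathbf u_1,\ldots,\mathbf u_n)\le\algL^k$, and set $\mathbf v_j:=(\mathbf l_1+\mathbf c_j,\mathbf u_1)$; membership $\mathbf v_j\in\mathbf R$ follows from Lemma \ref{lemma:difference}(1) applied to the first projection. Second, call $\CompRep(\algL\times\algU)$ on the extended list $(\mathbf a_1,\ldots,\mathbf a_n,\mathbf v_1,\ldots,\mathbf v_q)$. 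Since being a compact representation is a property of the underlying relation (signature and polynomial size), its output is also a compact representation of $\mathbf R\le\algA^k$ as soon as we verify the key equation
\begin{align*}
\mathbf R \;=\; \Sg_{(\algL\times\algU)^k}(\mathbf a_1,\ldots,\mathbf a_n,\mathbf v_1,\ldots,\mathbf v_q).
\end{align*}

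This equation is the heart of the proof. Given $(\mathbf l+\mathbf c,\mathbf u)\in\mathbf R$, I would first reach $(\mathbf l_1+\mathbf c,\mathbf u_1)$ by mimicking, with $m^{\algL\times\algU}$ on the $\mathbf v_j$'s, the $\algL$-Mal'tsev derivation that reaches $\mathbf c$ from $T$ (which exists because $T$ is a compact representation of a subpower of the affine algebra $\algL$). In the $U$-coordinate everything stays equal to $\mathbf u_1$ throughout, since every application of $m^{\algU}$ sees only triples $(\mathbf u_1,\mathbf u_1,\mathbf u_1)$; meanwhile $m^{\algL\times\algU}$ then acts as $m^{\algL}$ in the $L$-coordinate and the shift by $\mathbf l_1$ passes through unchanged by affineness. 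A single final application of $m^{\algL\times\algU}$ to the triple $\bigl((\mathbf l_1+\mathbf c,\mathbf u_1),\,\mathbf a_1,\,(\mathbf l,\mathbf u)\bigr)$, where $(\mathbf l,\mathbf u)\in\mathbf S\subseteq\Sg_{(\algL\times\algU)^k}(\mathbf a_i)$, yields $(\mathbf l+\mathbf c,\mathbf u)$ by the Mal'tsev identities in both coordinates.

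The main obstacle I foresee is the bookkeeping needed to keep the entire Mal'tsev reconstruction inside $\Clo(\algL\times\algU)$, despite the compact-representation property of $T$ being phrased purely in $\algL$: a priori one might need $\algL$-operations not available in $\algL\times\algU$. It is precisely the fact that all intermediate tuples in the first, delicate part of the derivation have their $U$-coordinate frozen at $\mathbf u_1$ that forces $m^{\algL\times\algU}$ to behave as $m^{\algL}$ on the $L$-side, so no extra clone machinery is invoked; the final Mal'tsev step uses only the generic identities. Once this is in place, polynomial runtime of the reduction is immediate, because each oracle is queried once on an input of polynomial size.
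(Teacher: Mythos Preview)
Your argument is correct, and the structural identity $\mathbf R=\{(\mathbf l+\mathbf c,\mathbf u):(\mathbf l,\mathbf u)\in\mathbf S,\ \mathbf c\in\mathcal C(\mathbf u_1,\ldots,\mathbf u_n)\}$ is exactly the core fact the paper establishes as well. The packaging, however, is genuinely different. The paper calls both oracles independently on the original input, obtaining compact representations $C^+$ of $\mathbf S$ and $\hat C$ of $\mathcal C(\mathbf u_1,\ldots,\mathbf u_n)$; it then chooses a Mal'tsev term of $\algA$ with $\hat m=0$ (available precisely because $\Clo(\algL\times\algU)\subseteq\Clo(\algA)$), forms the polynomial-size sum-set $D=\{\mathbf c+\hat{\mathbf r}:\mathbf c\in C^+,\ \hat{\mathbf r}\in\hat C\}\subseteq\mathbf R$, and verifies directly that $\Sig(D)=\Sig(\mathbf R)$ by rewriting the canonical Mal'tsev chain so that the $C^+$- and $\hat C$-parts interleave. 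Your approach instead \emph{composes} the two oracle calls: first $\CompRep(\mathcal C)$ yields $T$, then you feed the enlarged generating list $(\mathbf a_1,\ldots,\mathbf a_n,\mathbf v_1,\ldots,\mathbf v_q)$ into $\CompRep(\algL\times\algU)$ after proving that $\mathbf R$ coincides with the $(\algL\times\algU)$-subpower this list generates. Your route avoids any explicit signature computation, delegating that entirely to the second oracle; the price is the Mal'tsev-reconstruction argument for the key equation, and your ``frozen $U$-coordinate'' observation is indeed the right reason it works. One thing the paper's route buys that yours does not get for free: it upgrades immediately to \emph{enumerated} compact representations, whereas in your version the circuits returned by the second oracle define elements via $\Clo(\algL\times\algU)$-terms evaluated on the enlarged list, and translating these back to $\Clo(\algA)$-circuits on $(\mathbf a_1,\ldots,\mathbf a_n)$ would require an additional step.
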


\begin{proof}
Let $\mathbf a_1,\ldots,\mathbf a_n \in A^k$ an instance of $\CompRep(\algA)$; our goal is to find a compact representation of $\algB = \Sg_{\algA^k}(\mathbf a_1,\ldots,\mathbf a_n)$.  Let us write $\mathbf l_i$ and $\mathbf u_i$ for the projection of $\mathbf a_i$ to $L^k$ and $U^k$ respectively.  Let us further define $\algB^+ = \Sg_{(\algL\times \algU)^k}(\mathbf a_1,\ldots,\mathbf a_n)$.
Then
\begin{align*}
\algB &= \{(t^{\algL}(\mathbf l_1,\ldots, \mathbf l_n) + \hat t(\mathbf u_1,\ldots, \mathbf u_n), t^{\algU}(\mathbf u_1,\ldots, \mathbf u_n) \mid t \text{ is }F\text{-term} \}, \text{ and }\\
\algB^+ &= \{(t^{\algL}(\mathbf l_1,\ldots, \mathbf l_n),t^{\algU}(\mathbf u_1,\ldots, \mathbf u_n) \mid t \text{ is }F\text{-term}\}.
\end{align*}
Since $\Clo(\algL \times \algU) \subseteq \Clo(\algA)$, we can pick a Mal'tsev term of $\algA$ that is of the form $m^{\algA}((l_1,u_1),(l_2,u_2),(l_3,u_3)) = (l_1-l_2+l_3,m^{\algU}(u_1,u_2,u_3))$. Moreover, by Lemma \ref{lemma:difference}, every term $t^{\algA} \in \Clo(\algA)$ can be uniquely written as the sum of $t^{\algL \times \algU}$ (which by assumption is also in $\Clo(\algA)$) and some $\hat t \in \mathcal C$. Thus, every element of $\algB$ is equal to the sum of an element of $\algB^+$ and an expression $\hat t(\mathbf u_1,\ldots,\mathbf u_n)$.

Let $C^+$ be a compact representation of $\algB^+$, and $\hat C$ a compact representation of $\mathcal C(\mathbf u_1,\ldots,\mathbf u_n)$. Then, it follows that every tuple in $\algB$ can be written as
\begin{align} \label{fml:red1}
m(\ldots, m(\mathbf c_1,\mathbf d_2,\mathbf c_2), \ldots \mathbf d_n,\mathbf c_n) + \hat{\mathbf r}_1 - \hat{\mathbf s}_2 + \hat{\mathbf r}_2 - \ldots  - \hat{\mathbf s}_n + \hat{\mathbf r}_n,
\end{align}
for $\mathbf c_i, \mathbf d_i \in C^+$ and $\hat {\mathbf r}_i, \hat{\mathbf s}_i \in \hat C$. (We are aware that tuples in $C^+$ an $\hat C$ have different domains; here we follow the same convention as in Notation \ref{notation:diff}). Moreover, in formula (\ref{fml:red1}), any pair $\mathbf c_i, \mathbf d_i$ (respectively $\hat{\mathbf r}_i, \hat{\mathbf s}_i$) witnesses a fork in the $i$-th coordinate. By our choice of $m$ it is easy to see that formula (\ref{fml:red1}) can be rewritten to 
\begin{align*}
m(\ldots, m(\mathbf c_1 + \hat{\mathbf r}_1,\mathbf d_2 + \hat{\mathbf s}_2,\mathbf c_2 + \hat{\mathbf r}_2), \ldots \mathbf d_n + \hat{\mathbf s}_n,\mathbf c_n + \hat{\mathbf r}_n),
\end{align*}

Thus the elements $\mathbf c_i + \hat{\mathbf r}_i, \mathbf d_ + \hat{\mathbf s}_i$ witness forks of $\algB$ in the $i$-th coordinate. If we define $D = \{\mathbf c +\hat{\mathbf r} \mid \mathbf c \in C,\hat{\mathbf r} \in \hat C \}$, then it follows that $\Sig(D) = \Sig(\mathbf B)$. Moreover $D \subset \mathbf B$, and it is of polynomial size in $n$ and $k$, as $|D| \leq |C|\cdot|\hat C|$. Thus $D$ can be thinned out in polynomial time to a compact representation of $\algB$, which finishes our proof.
\end{proof}

We remark that, by following the proof of Theorem \ref{theorem:directprod}, also finding \emph{enumerated} compact representations in $\algA$ can be reduced to finding \emph{enumerated} compact representations in $\algL\times \algU$ and $\mathcal C$ (if $\mathcal C$ is given by some finite set of operations that generate it as a clonoid).

Unfortunately, the conditions of Theorem \ref{theorem:directprod} are not met for general wreath-products, not even if both $\algU$ and $\algL$ are both affine (the dihedral group $D_4$ can be shown to be a counterexample). But, if $\algU$ is supernilpotent, then we are able to prove the following reduction, independent of the conditions of Theorem \ref{theorem:directprod}:

\begin{theorem} \label{theorem:SMP}
Let $\algA = \algL \otimes \algU$ be a finite Mal'tsev algebra, and let $\mathcal C = \Diff_0(\algA)$ for some $0\in A$. Further, assume that $\algU$ is supernilpotent. Then $\SMP(\algA)$ reduces in polynomial time to $\CompRep(\mathcal C)$.
\end{theorem}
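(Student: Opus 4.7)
The strategy is to mirror the proof of Theorem~\ref{theorem:directprod}, with the hypothesis $\Clo(\algL \times \algU) \subseteq \Clo(\algA)$ there replaced by supernilpotence of $\algU$. The critical preliminary observation is that $\algL \times \algU$ is itself supernilpotent: $\algL$ is affine (hence $1$-supernilpotent), $\algU$ is supernilpotent by hypothesis, and supernilpotence is preserved by direct products. Hence by Theorem~\ref{theorem:SMPsupernilpotent}, $\CompRep(\algL \times \algU)$ already admits a deterministic polynomial-time algorithm producing \emph{enumerated} compact representations, and this will fill the role that the extra oracle plays in Theorem~\ref{theorem:directprod}.

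On input $\mathbf a_1, \ldots, \mathbf a_n \in A^k$, I would compute a compact representation of $\algB = \Sg_{\algA^k}(\mathbf a_1, \ldots, \mathbf a_n)$ in four stages. First,
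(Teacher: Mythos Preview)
Your observation that $\algL\times\algU$ is supernilpotent, so that Theorem~\ref{theorem:SMPsupernilpotent} furnishes enumerated compact representations for it in polynomial time, is correct and is precisely the starting point of the paper's argument. However, the proof cannot simply mirror Theorem~\ref{theorem:directprod}. That proof uses the hypothesis $\Clo(\algL\times\algU)\subseteq\Clo(\algA)$ in two essential places: first to choose a Mal'tsev term $m$ of $\algA$ with $\hat m=0$, so that formula~\eqref{fml:red1} can be rewritten as a single $m^{\algA}$-expression in elements of $D$; and second to conclude that for \emph{every} term $t$ the function $\hat t$ already lies in $\mathcal C$ (since $t^{\algL\times\algU}\in\Clo(\algA)$ gives a witness). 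Neither conclusion follows from supernilpotence of $\algU$. Concretely, if you only compute an enumerated compact representation of $\algB^{+}=\Sg_{(\algL\times\algU)^{k}}(\mathbf a_1,\ldots,\mathbf a_n)$, then a term $t$ with $t^{\algA}(\mathbf a_1,\ldots,\mathbf a_n)=\mathbf b$ and the circuit $s$ you recover from $C^{+}$ will satisfy $t^{\algL\times\algU}(\mathbf a_i)=s^{\algL\times\algU}(\mathbf a_i)$ but in general \emph{not} $t^{\algL\times\algU}=s^{\algL\times\algU}$ as operations; hence $\hat t-\hat s$ need not lie in $\mathcal C$, and the difference $\mathbf b - s^{\algA}(\mathbf a_i)$ need not lie in $\mathcal C(\mathbf u_1,\ldots,\mathbf u_n)$. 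The combination step from Theorem~\ref{theorem:directprod} therefore fails.

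The paper overcomes this with two ideas you have not mentioned. First, it pads the input tuples $\mathbf a_1,\ldots,\mathbf a_n$ with additional rows enumerating $H=\{(a_1,\ldots,a_n)\in A^{n}:\lvert\{i:a_i\neq 0\}\rvert\le S\}$, where $S$ is the supernilpotence degree of $\algL\times\algU$; by Theorem~\ref{theorem:supernilpotent}, equal values on the padded input now \emph{force} equal term operations of $\algL\times\algU$, so the circuits of the enumerated compact representation of the padded $\tilde{\algB}^{+}$ parameterise exactly the $\sim$-classes of $\Clo(\algA)$. Second, rather than producing a compact representation of $\algB$ (which your outline targets), the paper solves $\SMP$ directly: it uses \texttt{Fix-values} (Lemma~\ref{lemma:fixvalues}) to restrict to the fibre over $\mathbf u_b$, where $m^{\algA}$ acts affinely on the $L$-coordinate, and then tests whether $\mathbf l_b$ lies in the affine closure of $\{p_{\tilde{\mathbf c}}^{\algA}(\mathbf a_1,\ldots,\mathbf a_n)+\hat{\mathbf r}:\tilde{\mathbf c}\in\tilde C',\ \hat{\mathbf r}\in\hat C\}$. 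The padding step is the genuinely new idea needed beyond Theorem~\ref{theorem:directprod}, and your four-stage plan must incorporate it (or an equivalent device) to close the gap.
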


\begin{proof}
Let $\mathbf a_1,\ldots,\mathbf a_n , \mathbf b \in A^k$ an instance of $\SMP(\algA)$; our goal is to check whether $\mathbf b \in \algB = \Sg_{\algA^k}(\mathbf a_1,\ldots,\mathbf a_n)$. Let us write $\mathbf l_i$ and $\mathbf u_i$ for the projection of $\mathbf a_i$ to $L^k$ and $U^k$ respectively, and $\mathbf l_b$ and $\mathbf u_b$  for the projections of $\mathbf b$ to $L^k$ and $U^k$. Let $F$ be the signature of $\algA$ and $\algL\times \algU$, and let $\algB^+ = \Sg_{(\algL\times \algU)^k}(\mathbf a_1,\ldots,\mathbf a_n)$. Then
\begin{align*}
\algB &= \{(t^{\algL}(\mathbf l_1,\ldots, \mathbf l_n) + \hat t(\mathbf u_1,\ldots, \mathbf u_n), t^{\algU}(\mathbf u_1,\ldots, \mathbf u_n) \mid t \text{ is }F\text{-term} \}, \text{ and }\\
\algB^+ &= \{(t^{\algL}(\mathbf l_1,\ldots, \mathbf l_n),t^{\algU}(\mathbf u_1,\ldots, \mathbf u_n) \mid t \text{ is }F\text{-term}\}.
\end{align*}

Recall the definition of $t^{\algA} \sim s^{\algA}$ from Definition \ref{definition:diffclonoid}. If $T$ is a $\sim$-transversal set of $\{ t^{\algA} \in \Clo(\algA) \mid t^{\algU}(\mathbf u_1,\ldots, \mathbf u_n) = \mathbf u_b\}$, then clearly $\mathbf b \in B$ iff $\exists t \in T$ and $\mathbf d \in \mathcal C(\mathbf u_1,\ldots,\mathbf u_n)$, with $\mathbf b = t(\mathbf a_1,\ldots,\mathbf a_n) + \mathbf d$. So, intuitively speaking, the goal of this proof is to first compute such a transversal set, by computing an enumerated compact representation of $\{ (\mathbf l,  \mathbf u) \in \algB^+ \mid \mathbf u = \mathbf u_b \}$ and then use it together with a compact representation of $\mathcal C(\mathbf u_1,\ldots, \mathbf u_n)$ to check membership of $\mathbf b$ in $\algB$. 

In practice we need however to consider a relation of higher arity than $\algB^+$, since term operations of $\algL \times \algU$ are not uniquely determined by their value on $\mathbf a_1,\ldots,\mathbf a_n$. So let $S$ be the degree of supernilpotence of $\algU$ (and hence also $\algL \times \algU$). If we think about $\mathbf a_1,\ldots,\mathbf a_n$ as the columns of a matrix of dimension $k\times n$, then let $\tilde{\mathbf a}_1,\ldots,\tilde{\mathbf a}_n \in A^l$ be its extension by rows that enumerate $H = \{(a_1,\ldots,a_n) \in A^n \mid |\{i \colon a_i \neq 0 \}| \leq S \}$ (hence $l \leq k + |A|^S \binom{n}{S}$).

It follows from Theorem \ref{theorem:SMPsupernilpotent} that we can compute an enumerated compact representation $\tilde{C}$ of $\Sg_{(\algL\times \algU)^l}(\tilde{\mathbf a}_1,\ldots,\tilde{\mathbf a}_n)$ in polynomial time in $n$ and $l$. So, every element in $\tilde{B} = \Sg_{(\algL\times \algU)^l}(\tilde{\mathbf a}_1,\ldots,\tilde{\mathbf a}_n)$ can be written as $m(\ldots m(\tilde{\mathbf c}_1, \tilde{\mathbf d}_2, \tilde{\mathbf c}_2) \ldots \tilde{\mathbf d}_l,\tilde{\mathbf c}_l)$, for $(\tilde{\mathbf c}_i,p_{\tilde{\mathbf c_i}}) , (\tilde{\mathbf d}_i,p_{\tilde{\mathbf d_i}}) \in \tilde{C}$, where $\tilde{C}$ is of size at most $2l|A|^2$,  and every element of $\tilde{\mathbf c} \in \tilde{C}$ is equal to $p_{\tilde{\mathbf c}}(\tilde{\mathbf a}_1,\ldots,\tilde{\mathbf a}_n) = \tilde{\mathbf c}$ for the given circuit $p_{\tilde{\mathbf c}}$ of polynomial size.

By Theorem \ref{theorem:supernilpotent}, in an $S$-supernilpotent algebra, every term operation is already completely determined by its values on the subset $H$. It follows, that every $n$-ary term operation of $\algL\times\algU$ can be uniquely described by a circuit $m(\ldots m(p_{\tilde{\mathbf c}_1},p_{\tilde{\mathbf d}_2},p_{\tilde{\mathbf c}_2}), \ldots p_{\tilde{\mathbf d}_l},p_{\tilde{\mathbf c}_l})$ for $\tilde{\mathbf c}_i, \tilde{\mathbf d}_i \in \tilde{C}$.  By definition of $\sim$, it follows that also every $n$-ary term operation of $\algA$ is $\sim$-equivalent to the operation given by the circuit described by a circuit $m(\ldots m(p_{\tilde{\mathbf c}_1},p_{\tilde{\mathbf d}_2},p_{\tilde{\mathbf c}_2}), \ldots p_{\tilde{\mathbf d}_l},p_{\tilde{\mathbf c}_l})$ for $\tilde{\mathbf c}_i, \tilde{\mathbf d}_i \in \tilde{C}$.

We are however only interested in terms $t$ such that $t^{\algU}$ maps $\mathbf u_1,\ldots, \mathbf u_n$ to the value $\mathbf u_b$. By Lemma \ref{lemma:fixvalues}, we can also compute an enumerated compact representation $\tilde{C}'$ of $\{ (\tilde{\mathbf l},  \tilde{\mathbf u}) \in \Sg_{\algL\times \algU}(\tilde{\mathbf a}_1,\ldots,\tilde{\mathbf a}_n) \mid \tilde{\mathbf u}(i) = \mathbf u_b(i) $ for all  $ i =1,\ldots,k\}$ in polynomial time. (Although we only prove Lemma \ref{lemma:fixvalues} for fixing variables to constants, we remark that it can straightforwardly be generalized to fixing the value of the variables to domains $L\times \{u\}$. Alternatively, this can also be achieved by regarding $\Sg_{(\algL\times \algU)^l}(\tilde{\mathbf a}_1,\ldots,\tilde{\mathbf a}_n)$ as a subalgebra of $\algU^l\times\algL^l$, which however would require us to work with relations on different domains).

If $\tilde{C}' = \emptyset$, then we output ``False'', as then $\mathbf u_b \notin \Sg_{\algU^k}(\mathbf u_1,\ldots, \mathbf u_n)$. Otherwise, let $C = \{ p_{\tilde{\mathbf c}}^{\algA}(\mathbf a_1,\ldots,\mathbf a_n) \mid \tilde{\mathbf c} \in \tilde{C}' \}$.  Also, let $\hat C$ be a compact representation of $\mathcal C(\mathbf u_1,\ldots, \mathbf u_n)$.  By our proof, every element of $\{ (\mathbf l,  \mathbf u) \in \algB \mid \mathbf u = \mathbf u_b \}$ is equal to the sum of an element $m^{\algA}(\ldots, m^{\algA}(\mathbf c_1,\mathbf d_2,\mathbf c_2), \ldots \mathbf d_n,\mathbf c_n)$ with $\mathbf c_i,\mathbf d_i \in C$ and an element of $\mathcal C(\mathbf u_1,\ldots, \mathbf u_n)$. Since $m$ is an affine Malt'sev operation when restricted to $\{ (\mathbf l,  \mathbf u) \in \algB \mid \mathbf u = \mathbf u_b \}$ this means that $\mathbf b \in \algB$ iff $\mathbf l_b$ is in the affine closure of all elements $\mathbf c + \hat{\mathbf r}$ with $\mathbf c \in C$ and $\hat{\mathbf r}\in \hat C$. But this can be checked in polynomial time (by generalized Gaussian elimination, or Theorem \ref{theorem:SMPsupernilpotent}), which finishes the proof.
\end{proof}

\section{Clonoids between affine algebras} \label{section:clonoids}
We continue our paper with an analysis of clonoids between affine algebras to prove our main result, Theorem \ref{theorem:main}.

For a prime $p$, let us write $\Z_p$ for the cyclic group of order $p$, i.e. $\Z_p = (\{0,1,\ldots,p-1\},+,0,-)$. Let us further define the idempotent reduct $\Z_p^{id} = (\{0,1,\ldots,p-1\}, x-y+z)$.  Using the unary terms $a x = x + \cdots + x$ ($a$-times), for $a \in \Z_p$, we can regard $\Z_p$ as a vector space over the $p$-element field. More general, using this notation, we will also consider finite Abelian groups $(L,+,0,-)$ as modules over $\Z_{|L|}$.

For short, we are going to denote constant $1$-tuples by $\mathbf 1 = (1,1,\ldots,1) \in \Z_p^n$. For two vectors $\mathbf{a}, \mathbf x \in \Z_p^n$, we further denote by $\mathbf{a} \cdot \mathbf x = \sum_{i = 1}^n \mathbf{a}(i) \cdot \mathbf x(i)$ the standard inner product. Then $\Clo(\Z_p) = \{\mathbf x \mapsto \mathbf a \cdot \mathbf x \mid \mathbf a \in \Z_p^n \}$ and $\Clo(\Z_p^{id}) = \{ \mathbf x \mapsto \mathbf a \cdot \mathbf x \mid \mathbf a \in \Z_p^n, \mathbf{a} \cdot \mathbf 1 = 1 \}$.

In this section, we are going to study clonoids between affine algebras $\algU$ and $\algL$, such that $|U| = p$ for some prime $p$, and $p \nmid |L|$. Since every such affine algebra $\algU$ has $x-y+z$ as a term operation, it makes sense to study the special case $\algU = \Z_p^{id}$. As we are in particular interested in difference clonoids, we furthermore can assume that $\algL$ contains a constant operations $0$ (see Lemma \ref{lemma:difference}), and hence the operations of the Abelian group $(L,+,0,-)$. We remark that our analysis is structurally similar to (but not covered by) Fioravanti's classification of $(\Z_p,\Z_q)$-clonoids \cite{fioravanti-clonoids}.

\subsection{$(\Z_p^{id}, \algL)$-clonoids satisfying $p \nmid |L|$ and $f(x,x,\ldots,x) = 0$}
Throughout this subsection, let $p$ be a prime, and $\algL = (L,+,0,-)$ an Abelian group with $p \nmid |L|$, and $\mathcal C$ be a $(\Z_p^{id}, \algL)$-clonoids satisfying $f(x,x,\ldots,x) = 0$ for all $f \in \mathcal C$ and $x \in \Z_p$. In other words, for every $n\in \N$, $\mathcal C$ maps all tuples from the diagonal $\Delta^n = \{(x,x\ldots,x) \in \Z_p^n\}$ to $0$. We are going to prove that $\mathcal C$ is generated by its binary elements, and therefore by any set of generators $B$ of $\mathcal C^{(2)} \leq \algL^{\Z_p^2}$. Moreover, from $B$, we are going to construct canonical generating set of the $n$-ary functions $\mathcal C^{(n)} \leq \algL^{\Z_p^n}$. 
We are, in particular going to use the following set of cofficient vectors for every $n>2$:
\[ C_n = \{ \mathbf a \in \Z_p^n \mid \exists i > 1 \colon \mathbf a(1) = \mathbf a(2) = \ldots = \mathbf a(i-1)  = 0, \mathbf a(i) = 1 \}. \]

\begin{observation}
Every 2-dimensional subspace $V\leq \Z_p^n$ containing the diagonal $\Delta^n$ has a unique parameterization by the map 
\begin{align*}
e_{\mathbf c}(x,y) &= x (\mathbf 1 - \mathbf c) + y \mathbf c = (x, \mathbf c(2)x + (1-\mathbf c(2))y,\ldots,\mathbf c(n)x + (1-\mathbf c(n))y),
\end{align*}
for some $\mathbf c \in C_n$.
\end{observation}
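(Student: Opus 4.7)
The plan is to establish both existence and uniqueness of the vector $\mathbf c \in C_n$ parameterizing $V$. As a preliminary step, note that for any $\mathbf c \in C_n$ the image of $e_{\mathbf c}$ is exactly $\mathrm{span}(\mathbf 1 - \mathbf c, \mathbf c) = \mathrm{span}(\mathbf 1, \mathbf c)$, and $e_{\mathbf c}$ is a linear bijection onto its image: indeed $\mathbf 1 - \mathbf c$ has first entry $1$ while $\mathbf c$ has first entry $0$ (because $\mathbf c \in C_n$ forces $\mathbf c(1) = 0$), so these two vectors are linearly independent in $\Z_p^n$. Thus it suffices to show that there is a unique $\mathbf c \in C_n$ with $V = \mathrm{span}(\mathbf 1, \mathbf c)$.

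For existence, pick any $\mathbf v \in V \setminus \Delta^n$ (which exists since $\dim V = 2 > 1 = \dim \Delta^n$). Replace $\mathbf v$ by $\mathbf v' = \mathbf v - \mathbf v(1) \mathbf 1$; then $\mathbf v'(1) = 0$ and $\mathbf v' \neq 0$, because otherwise $\mathbf v = \mathbf v(1) \mathbf 1 \in \Delta^n$. Let $i \geq 2$ be the smallest index with $\mathbf v'(i) \neq 0$. Since $p$ is prime, $\mathbf v'(i)$ is invertible in $\Z_p$, so we may set $\mathbf c = \mathbf v'(i)^{-1} \mathbf v'$. Then $\mathbf c(1) = \cdots = \mathbf c(i-1) = 0$ and $\mathbf c(i) = 1$, so $\mathbf c \in C_n$. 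Moreover $V = \mathrm{span}(\mathbf 1, \mathbf v) = \mathrm{span}(\mathbf 1, \mathbf v') = \mathrm{span}(\mathbf 1, \mathbf c)$, which equals the image of $e_{\mathbf c}$.

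For uniqueness, suppose $\mathbf c, \mathbf c' \in C_n$ both satisfy $V = \mathrm{span}(\mathbf 1, \mathbf c) = \mathrm{span}(\mathbf 1, \mathbf c')$. Write $\mathbf c' = \alpha \mathbf 1 + \beta \mathbf c$ with $\alpha, \beta \in \Z_p$. Comparing the first entries gives $0 = \mathbf c'(1) = \alpha + \beta \cdot 0 = \alpha$, so $\mathbf c' = \beta \mathbf c$. Let $i$ be the smallest index with $\mathbf c(i) = 1$. Since $\mathbf c'$ vanishes on coordinates $1, \ldots, i-1$ and equals $\beta$ at position $i$, and since $\mathbf c' \neq 0$ (elements of $C_n$ are nonzero), we have $\beta \neq 0$ and the first nonzero coordinate of $\mathbf c'$ is at position $i$ with value $\beta$. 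Membership $\mathbf c' \in C_n$ forces $\beta = 1$, hence $\mathbf c' = \mathbf c$.

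I do not expect any genuine obstacle here; the only step requiring slight care is the uniqueness argument, where one must exploit the defining property of $C_n$ (namely that the first nonzero entry is forced to equal $1$) to pin down the scalar $\beta$. Everything else is a direct linear-algebra computation in $\Z_p^n$, using only that $p$ is prime so that nonzero entries are invertible.
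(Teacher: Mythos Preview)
Your proof is correct and follows essentially the same approach as the paper: for existence you subtract a multiple of $\mathbf 1$ to zero out the first coordinate and then scale so that the first nonzero entry equals $1$, exactly as the paper does. Your uniqueness argument is more explicit than the paper's (which merely asserts that different elements of $C_n$ generate different planes with $\mathbf 1$), but the underlying idea is the same.
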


\begin{proof}To see this, note that $V$ contains $\mathbf 1$, and can be therefore parameterized by $e_{\mathbf d}(x,y)$, for some $\mathbf d \notin \Delta^n$. So there is an index $i$ with $\mathbf d(1) = \ldots = \mathbf d(i-1) \neq \mathbf d(i)$. If $\mathbf d \notin C_n$, then we define $\mathbf c = (\mathbf d(i)-\mathbf d(1))^{-1}(\mathbf d - \mathbf d(1) \mathbf 1)$; clearly $\mathbf c \in C_n$, and $\mathbf c$ and $\mathbf 1$ still generate $V$. It is further not hard to see that different elements of $C_n$ generate different planes together with $\mathbf 1$, thus we obtain a unique parameterization of $V$ by $e_{\mathbf c}(x,y)$.
\end{proof}

\begin{lemma} \label{lemma:binary1}
Let $f \in \mathcal C^{(2)}$. Then, there is a function $f_n \in \mathcal C^{(n)}$, such that

$$f_n(x_1,x_2,\ldots,x_n) = \begin{cases}
f(x_1,x_2) \text{ if } x_2 = x_3 = \ldots = x_n\\
0 \text{ else.}
\end{cases}
$$
\end{lemma}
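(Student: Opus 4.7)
The plan is to prove the lemma by induction on $n \geq 2$. The base case $n=2$ is immediate: take $f_2 := f$. For the inductive step, I would assume $f_{n-1}\in\mathcal C^{(n-1)}$ has the claimed form and build $f_n$ as a linear combination of pullbacks of $f_{n-1}$. The operations at my disposal are composition on the inside with $\Z_p^{id}$-terms (affine combinations whose coefficients sum to $1$) and on the outside with the group operations of $\algL$; in particular, integer multiplication by any $m\in\Z$, including $m=p^{-1}\bmod|L|$, is available because $p\nmid|L|$.

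Concretely, I would first form the $\Z_p$-averaged function
\[
 D(x_1,\ldots,x_n) := \sum_{t=0}^{p-1} f_{n-1}\bigl(x_1,\,x_2,\,\ldots,\,x_{n-2},\, tx_{n-1}+(1-t)x_n\bigr) \in \mathcal C^{(n)},
\]
and then its shifted counterpart obtained by a $\Z_p^{id}$-substitution in the last two coordinates:
\[
 D^*(x_1,\ldots,x_n) := D\bigl(x_1,\,x_2,\,\ldots,\,x_{n-2},\, x_1,\, x_1+x_{n-1}-x_n\bigr) \in \mathcal C^{(n)}.
\]
The candidate is
\[
 f_n := \tfrac{1}{p}(D-D^*).
\]

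To verify the values, I would split cases on whether $x_{n-1}=x_n$. If $x_{n-1}=x_n$, the inner expression $tx_{n-1}+(1-t)x_n$ collapses to a constant and $D=p\cdot f_{n-1}(x_1,x_2,\ldots,x_{n-2},x_{n-1})$; if $x_{n-1}\neq x_n$, the map $t\mapsto tx_{n-1}+(1-t)x_n$ is a bijection onto $\Z_p$ and $D=\sum_{v\in\Z_p} f_{n-1}(x_1,x_2,\ldots,x_{n-2},v)$. Plugging in the inductive description of $f_{n-1}$, one obtains that $D$ equals $p\,f(x_1,x_2)$ on the full diagonal $x_2=\ldots=x_n$, while on the complement it reduces to an expression depending only on $(x_1,x_2,\ldots,x_{n-2})$. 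The substitution defining $D^*$ is designed so that (i) the condition $x_{n-1}\neq x_n$ is \emph{preserved} (indeed $x_1=x_1+x_{n-1}-x_n \Leftrightarrow x_{n-1}=x_n$), ensuring that $D$ and $D^*$ agree off the full diagonal and cancel in the subtraction; and (ii) on the full diagonal $x_2=\ldots=x_n$ the substitution forces $x_2=x_1$, so the on-diagonal value of $D^*$ reduces to $p\,f(x_1,x_1)=0$ by the assumed vanishing of $\mathcal C$ on constant tuples. Hence $D-D^* = p\,f(x_1,x_2)$ on the full diagonal and $0$ elsewhere, and dividing by $p$ yields $f_n$ as required.

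The main delicate point will be bookkeeping the case analysis, because the off-diagonal shape of $D$ differs slightly between $n=3$ (where $f_{n-1}=f$ is not yet an indicator and the off-diagonal value of $D$ is $\sum_v f(x_1,v)$) and $n\geq 4$ (where the inductive indicator form of $f_{n-1}$ localises this value to $f(x_1,x_2)$ on $\{x_2=\ldots=x_{n-2}\}$). In both regimes the same substitution $x_n\mapsto x_1+x_{n-1}-x_n$ effects exact cancellation, so no further ingredients should be needed beyond carefully unfolding $f_{n-1}$ in each case.
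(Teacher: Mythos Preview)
Your proof is correct and follows essentially the same inductive strategy as the paper: average a pullback of $f_{n-1}$ so that its value becomes independent of the last coordinates when $x_{n-1}\neq x_n$, subtract a substituted copy that cancels this off-diagonal value and vanishes on the diagonal via $f(x,x)=0$, then divide by the resulting power of $p$. The only variation is that you average over a single coordinate (a sum over $\Z_p$, yielding a factor of $p$), whereas the paper perturbs all of $x_2,\ldots,x_n$ simultaneously (a sum over $\Z_p^{n-1}$, yielding a factor of $p^{n-1}$); your version is marginally more economical, but the underlying idea is identical.
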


\begin{proof}
We prove the lemma by induction on $n$. For $n=2$, we simply set $f_2 = f$. For an induction step $n \to n+1$, we first define $t_{n+1}(x_1,x_2,\ldots,x_n,x_{n+1})$ as the sum
\begin{align*} \sum_{\mathbf a \in \Z_p^{n-1}} f_n(x_1,x_2 + \mathbf a(1)(x_{n+1}-x_n),\ldots,x_n+\mathbf a(n-1)(x_{n+1}-x_n)) \\
-\sum_{\mathbf a \in \Z_p^{n-1}} f_n(x_1,x_1 + \mathbf a(1)(x_{n+1}-x_n),\ldots,x_1+\mathbf a(n-1)(x_{n+1}-x_n)).
\end{align*}
Note that, if $x_{n+1} \neq x_n$, then $t_{n+1}$ evaluates to $\sum_{\mathbf a \in \Z_p^{n-1}}f(x_1,\mathbf a) - \sum_{\mathbf a \in \Z_p^{n-1}}f(x_1,\mathbf a) = 0$. On the other hand, if $x_n = x_{n+1}$, then the second sum is equal to $0$, while the first one is equal to $p^{n-1} f_n(x_1,x_2,\ldots,x_n)$. By the induction hypothesis, the function $f_{n+1} = p^{-(n-1)} t_{n+1}$ satisfies the statement of the lemma (note that $p^{-(n-1)}$ exist modulo $|L|$, since $p \nmid |L|$).
\end{proof}

We can prove an analogue statement for all 2-dimensional subspaces of $\Z_p^n$ containing $\Delta^n$:

\begin{lemma} \label{lemma:binary2}
Let $f \in \mathcal C^{(2)}$, and $\mathbf c \in C_n$. Then there is a function $f^{\mathbf c} \in \mathcal C^{(n)}$, such that
$$f^{\mathbf c}(x_1,x_2,\ldots,x_n) = \begin{cases}
f(x,y) \text{ if } (x_1,x_2,\ldots,x_n) = e_{\mathbf c}(x, y) \\
0 \text{ else.}
\end{cases}
$$
\end{lemma}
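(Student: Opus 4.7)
The plan is to realise $f^{\mathbf c}$ as $f_n$ (from Lemma \ref{lemma:binary1}) composed with a carefully chosen inner substitution by operations from $\Clo(\Z_p^{id})$, so that the ``one distinguished coordinate is constant'' pattern of $f_n$ pulls back along the plane $V_{\mathbf c} = \{e_{\mathbf c}(x,y) : x,y\in\Z_p\}$. Let $i$ denote the smallest index with $\mathbf c(i) = 1$ (which exists by the definition of $C_n$). For $j \in \{2,\ldots,n\}$ I would set
$$L_j(\mathbf x) = \begin{cases} \mathbf c(j)^{-1} x_j + (1-\mathbf c(j)^{-1}) x_1 & \text{if } \mathbf c(j) \neq 0,\\ x_j - x_1 + x_i & \text{if } \mathbf c(j) = 0. \end{cases}$$
Each $L_j$ is an affine $\Z_p$-combination whose coefficients sum to $1$, hence lies in $\Clo(\Z_p^{id})$. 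I would then define
$$f^{\mathbf c}(x_1,\ldots,x_n) := f_n\bigl(x_1,\,L_2(\mathbf x),\ldots,L_n(\mathbf x)\bigr),$$
which is in $\mathcal C$ because $\mathcal C$ is closed under inner composition with $\Clo(\Z_p^{id})$.

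Two verifications will remain. First, on any tuple $e_{\mathbf c}(x,y)$ one has $x_1 = x$, $x_i = y$, and each $L_j$ evaluates to $y$: for $\mathbf c(j)\neq 0$, plugging in $x_j = (1-\mathbf c(j))x + \mathbf c(j)y$ yields $L_j = y$ after a one-line calculation, while for $\mathbf c(j) = 0$ one has $x_j = x$ and so $L_j = x - x + y = y$. Hence $f^{\mathbf c}$ will agree with $f_n(x,y,y,\ldots,y) = f(x,y)$ on $V_{\mathbf c}$. Second, I would show that if $L_2(\mathbf x) = \cdots = L_n(\mathbf x) = \mu$ for some $\mu$, then $\mathbf x \in V_{\mathbf c}$: taking $j = i$ forces $\mu = x_i$; then $L_j = \mu$ for $\mathbf c(j)\neq 0$ gives $x_j - x_1 = \mathbf c(j)(\mu - x_1)$, and $L_j = \mu$ for $\mathbf c(j) = 0$ gives $x_j = \mu + x_1 - x_i = x_1$. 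These conditions characterise exactly membership in $V_{\mathbf c}$ with $y - x = \mu - x_1$, so whenever $\mathbf x \notin V_{\mathbf c}$ the arguments after $x_1$ are not all equal and $f_n$ returns $0$ by Lemma \ref{lemma:binary1}.

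The main subtlety lies in the indices $j$ with $\mathbf c(j) = 0$ (in particular the block $j<i$): projecting such coordinates onto $x_j$ alone would fail to witness off-plane violations at those positions, while substituting $x_i$ uniformly would fail to witness violations at coordinates $j > i$ with $\mathbf c(j) = 0$. The combination $x_j - x_1 + x_i$ marries these two desiderata in a single element of $\Clo(\Z_p^{id})$, and is the key design choice of the construction; once it is in place, the verifications above are short and mechanical.
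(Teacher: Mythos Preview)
Your proof is correct and follows essentially the same approach as the paper: both define $f^{\mathbf c}$ as $f_n$ precomposed with an affine change of variables from $\Clo(\Z_p^{id})$ that carries the plane $V_{\mathbf c}$ onto the ``standard'' plane $\{(x,y,\ldots,y)\}$. The paper states this abstractly by asserting the existence of a matrix $T$ with $T\mathbf 1=\mathbf 1$ and $T(\mathbf 1-\mathbf c)=\mathbf e_1$ and setting $f^{\mathbf c}=f_n\circ T$, while you write down explicit rows $L_j$ (which indeed satisfy those two conditions) and verify the on-plane and off-plane behaviour directly.
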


\begin{proof}
Let $\mathbf c \in C^{(n)}$. There is a matrix $\mathbf T \in \Z_p^{n\times n}$, such that $\mathbf T \cdot \mathbf 1 = \mathbf 1$ and $\mathbf T \cdot (\mathbf 1 -\mathbf c) = \mathbf e_1$.
Let $f_n$ as in Lemma 2 and $f^{\mathbf c} := f_n\circ T$. Note that by the first condition, all rows of $T$ sum up to $1$, hence $T$ can be expressed by terms of $\Z_p^{id}$. Then $f^{\mathbf c}(e_{\mathbf c}(x,y)) = f_n(T(x(\mathbf 1 -\mathbf c) + y\mathbf c)) = f_n(x \mathbf e_1 + y(\mathbf 1-\mathbf e_1)) = f(x,y)$, and $f^{\mathbf c}(\mathbf x) = 0$ for $\mathbf x \notin e_{\mathbf c}(\Z_p^2)$.
\end{proof}

We are now ready to prove the main result of this section:

\begin{lemma} \label{lemma:idempotent}
Let $\mathcal C$ be a $(\Z_p^{id}, \algL)$-clonoid satisfying $\forall f \in \mathcal C, x \in \Z_p \colon f(x,\ldots,x) = 0$, and let $B$ be a generating set of $\mathcal C^{(2)} \leq \algL^{\Z_p^2}$. Then 
\begin{bracketenumerate}
\item $\mathcal C$ is the $(\Z_p^{id}, \algL)$-clonoid generated by $B$, and
\item $B_{n} := \{ f^{\mathbf c} \mid f \in B, \mathbf c \in C_n \}$ is a generating set of $\mathcal C^{(n)}$ in $\algL^{\Z_p^n}$,
\end{bracketenumerate}
\end{lemma}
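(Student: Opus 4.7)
The plan is to decompose any $g \in \mathcal C^{(n)}$ into binary pieces by restricting it to the 2-dimensional subspaces of $\Z_p^n$ that contain the diagonal $\Delta^n$, and then lifting each piece back via Lemma \ref{lemma:binary2}. Concretely, for each $\mathbf c \in C_n$ define a binary function $g_{\mathbf c}(x,y) := g(e_{\mathbf c}(x,y))$. Since each coordinate of $e_{\mathbf c}(x,y)$ is of the form $(1-\mathbf c(i))x+\mathbf c(i)y$, which is a $\Z_p^{id}$-term (the coefficients sum to $1$), inner composition closure gives $g_{\mathbf c} \in \mathcal C^{(2)}$. Moreover $g_{\mathbf c}(x,x) = g(x\mathbf 1) = 0$ by hypothesis, so $g_{\mathbf c}$ vanishes on the 2-ary diagonal. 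Hence Lemma \ref{lemma:binary2} applies and produces $g_{\mathbf c}^{\mathbf c} \in \mathcal C^{(n)}$.

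The key identity I would establish is
\[ g \;=\; \sum_{\mathbf c \in C_n} g_{\mathbf c}^{\mathbf c}. \]
This reduces to a pointwise check. For $\mathbf x \in \Delta^n$ both sides vanish (the left by hypothesis on $g$, the right because each $g_{\mathbf c}^{\mathbf c}$ restricted to the diagonal equals $g_{\mathbf c}$ evaluated on the 2-ary diagonal, which is $0$). For $\mathbf x \notin \Delta^n$, the unique 2-dimensional subspace of $\Z_p^n$ containing both $\mathbf x$ and $\mathbf 1$ corresponds, by the observation preceding Lemma \ref{lemma:binary2}, to exactly one $\mathbf c \in C_n$; hence $\mathbf x \in e_{\mathbf c}(\Z_p^2)$ for a unique $\mathbf c$, so only the corresponding summand is nonzero and it evaluates to $g(\mathbf x)$. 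The parameter set $C_n$ is precisely what guarantees this unique parameterization.

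Once the decomposition is in hand, both parts of the lemma follow by inspecting the construction $(\,\cdot\,)^{\mathbf c}$. First, for (1): $\mathcal C^{(2)}$ is generated by $B$ inside $\algL^{\Z_p^2}$, so every $g_{\mathbf c}$ is obtained from $B$ by outer $\algL$-operations, and therefore $g_{\mathbf c}^{\mathbf c}$ lies in the clonoid generated by $B$ (the lifting of Lemmas \ref{lemma:binary1} and \ref{lemma:binary2} uses only inner $\Z_p^{id}$-compositions, outer $\algL$-sums, and multiplication by the scalar $p^{-(n-1)}\in\Z_{|L|}$, which exists since $p\nmid|L|$). Summing gives $g$, hence $\mathcal C$ is generated by $B$. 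For (2), one observes that $f \mapsto f^{\mathbf c}$ is $\Z_{|L|}$-linear in $f$ (the construction is built entirely from additions, $\Z_p^{id}$-reparametrisations, and scalar multiplications, all of which commute with outer $\algL$-combinations). Thus if $g_{\mathbf c} = \sum_i \alpha_i f_i$ with $f_i \in B$, then $g_{\mathbf c}^{\mathbf c} = \sum_i \alpha_i f_i^{\mathbf c}$, and the full decomposition expresses $g$ as an $\algL$-combination of $B_n$.

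The main obstacle is really just the verification of the partition property used in the pointwise check — once the unique parameterization of 2-dimensional subspaces through $\mathbf 1$ by $C_n$ is in place, and the additivity of the lifting is noted, everything else is bookkeeping. A minor point to be careful about is confirming that the addition and scalar multiplications used are available as outer $\algL$-operations (which they are, since $\algL$ is a finite Abelian group and the relevant scalars are units modulo $|L|$).
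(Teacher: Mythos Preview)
Your proposal is correct and follows essentially the same approach as the paper: define $g_{\mathbf c}(x,y)=g(e_{\mathbf c}(x,y))\in\mathcal C^{(2)}$, lift via Lemma~\ref{lemma:binary2} to $g_{\mathbf c}^{\mathbf c}$, use the unique parameterization of 2-planes through $\Delta^n$ to get $g=\sum_{\mathbf c\in C_n} g_{\mathbf c}^{\mathbf c}$, and conclude by linearity of the lifting $f\mapsto f^{\mathbf c}$. If anything, your write-up is more explicit than the paper's about why the inner substitutions are $\Z_p^{id}$-terms and why the lifting is $\Z_{|L|}$-linear.
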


\begin{proof}
For any $g \in \mathcal C^{(n)}$ and $\mathbf c \in C^{(n)}$, let us define the binary operation $g_{\mathbf c} = f(e_{\mathbf c}(x,y)) \in \mathcal C^{(2)}$. By Lemma \ref{lemma:binary2}, $g_{\mathbf c}$ generates a function $g_{\mathbf c}^{\mathbf c} \in \mathcal C^{(n)}$, that agrees with $f(x,y)$ on all tuples of the form $e_{\mathbf c}(x,y)$, and that is $0$ else. Since every point of $\Z_p^n \setminus \Delta^n$ is in the image of a unique map $e_{\mathbf c}$, we get $g = \sum_{\mathbf c \in C_n} g_{\mathbf c}^{\mathbf c}$. Every element of the form $g^{\mathbf c}$ can be clearly written as a linear combination of elements $f^{\mathbf c}$, where $f \in B$. It follows that $B_{n}$ generates $\mathcal C^{(n)}$ in $\algL^{\Z_p^n}$, and that the clonoid generated by $B$ is $\mathcal C$.
\end{proof}

We remark that if $\algL = \Z_q$ for a prime $q\neq p$, and $B$ is a basis of the vector space $\mathcal C^{(2)} \leq \algL^{\Z_p^2}$, then also $B_n$ is a basis. The generating set $B_{n}$ can be used to decide efficiently the following version of the subpower membership problem for $\mathcal C$:

\begin{lemma} \label{lemma:SMP}
Let $\mathcal C$ be a $(\Z_p^{id},\algL)$-clonoid satisfying $\forall f \in \mathcal C, x \in \Z_p \colon f(x,\ldots,x) = 0$. Then we can solve $\CompRep(\mathcal C)$ in polynomial time.
\end{lemma}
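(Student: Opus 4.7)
The plan is to use Lemma \ref{lemma:idempotent} to reduce the problem to evaluating only a polynomial-size, carefully chosen subset of the canonical generators $B_n$, and then to call Theorem \ref{theorem:SMPsupernilpotent} on the resulting generators of the image subgroup in $\algL^k$.

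Given the input $\mathbf u_1,\ldots,\mathbf u_n \in \Z_p^k$, view it as a $k\times n$ matrix with $j$-th row $\mathbf x_j := (\mathbf u_1(j),\ldots,\mathbf u_n(j)) \in \Z_p^n$. For any $g \in \mathcal C^{(n)}$ the $j$-th coordinate of $g(\mathbf u_1,\ldots,\mathbf u_n)$ equals $g(\mathbf x_j)$, which by the diagonal-zero hypothesis is $0$ whenever $\mathbf x_j \in \Delta^n$. For each remaining $j$ I would compute, in time $O(n)$, the unique $\mathbf c_j \in C_n$ with $\mathbf x_j \in e_{\mathbf c_j}(\Z_p^2)$: take $i$ to be the least index with $\mathbf x_j(i)\neq\mathbf x_j(1)$ and set $\mathbf c_j := (\mathbf x_j(i)-\mathbf x_j(1))^{-1}(\mathbf x_j - \mathbf x_j(1)\mathbf 1)$. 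Put $C^* := \{\mathbf c_j : \mathbf x_j \notin \Delta^n\}$, so $|C^*| \le k$.

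Fix a constant-sized generating set $B$ of the finite group $\mathcal C^{(2)} \leq \algL^{\Z_p^2}$ (this is possible since $\mathcal C$ is fixed and $\mathcal C^{(2)}$ has at most $|L|^{p^2}$ elements) and form
\[ G := \{\, f^{\mathbf c}(\mathbf u_1,\ldots,\mathbf u_n) \,:\, f \in B,\ \mathbf c \in C^*\,\} \subseteq L^k. \]
I claim $G$ generates $\mathcal C(\mathbf u_1,\ldots,\mathbf u_n)$ as a subgroup of $\algL^k$. By Lemma \ref{lemma:idempotent}(2), every $g \in \mathcal C^{(n)}$ lies in the subgroup of $\algL^{\Z_p^n}$ generated by the functions $f^{\mathbf c}$ with $f\in B$ and $\mathbf c \in C_n$; but for $\mathbf c \notin C^*$ the function $f^{\mathbf c}$ is supported on the plane $e_{\mathbf c}(\Z_p^2)$, which by uniqueness of the parameterizing vector contains no column $\mathbf x_j$, so $f^{\mathbf c}(\mathbf u_1,\ldots,\mathbf u_n) = 0$. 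Each element of $G$ can be computed in $O(nk)$ time via pointwise evaluation of $f^{\mathbf c}$, using that its value on $\mathbf x_j$ equals $0$ if $\mathbf x_j \notin e_{\mathbf c}(\Z_p^2)$ and $f(x,y)$ when $\mathbf x_j = e_{\mathbf c}(x,y)$. Hence $G$ is produced in polynomial time.

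Finally, since $\algL$ is a finite Abelian group, it is a supernilpotent Mal'tsev algebra, so Theorem \ref{theorem:SMPsupernilpotent} applied to $\algL$ with generators $G$ yields a compact representation of $\Sg_{\algL^k}(G) = \mathcal C(\mathbf u_1,\ldots,\mathbf u_n)$ in polynomial time. The only conceptual hurdle is to notice that the generating set $B_n$ of $\mathcal C^{(n)}$ has size exponential in $n$, but collapses to a polynomial-sized image after evaluation on the input, thanks to the support of each $f^{\mathbf c}$ being confined to the single $2$-plane $e_{\mathbf c}(\Z_p^2)$.
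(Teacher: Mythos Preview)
Your proof is correct and follows essentially the same approach as the paper: use Lemma~\ref{lemma:idempotent} to get the generating set $B_n$, observe that at most $k$ of the vectors $\mathbf c\in C_n$ can give a nonzero evaluation $f^{\mathbf c}(\mathbf u_1,\ldots,\mathbf u_n)$ (since each row $\mathbf x_j\notin\Delta^n$ lies in a unique plane $e_{\mathbf c}(\Z_p^2)$), compute those polynomially many generators, and finish with generalized Gaussian elimination / Theorem~\ref{theorem:SMPsupernilpotent}. Your write-up is in fact a bit more explicit than the paper's about how to compute each $\mathbf c_j$ and why $|B|$ is constant.
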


\begin{proof}
By Lemma \ref{lemma:idempotent}, $\mathcal C^{(n)}$ is the linear closure of $B_{n}$.  Thus $\mathcal C(\mathbf u_1,\ldots, \mathbf u_n)$ is equal to the linear closure of $B_{n}(\mathbf u_1,\ldots, \mathbf u_n) := \{ f^{\mathbf c}(\mathbf u_1,\ldots, \mathbf u_n) \mid f \in B, \mathbf c \in C_n \}$.

Note that the $i$-th entry $f^{\mathbf c}(\mathbf u_1,\ldots, \mathbf u_n)(i)$ of such a generating element can only be different from $0$ if $(\mathbf u_1,\ldots, \mathbf u_n)(i)$ lies in the 2-dimensional subspace generated by the diagonal $\Delta^n$ and $\mathbf c$. Thus, there are at most $k$ many vectors $\mathbf c \in C_n$ such that $f^{\mathbf c}(\mathbf u_1,\ldots, \mathbf u_n) \neq \mathbf 0$, let $\mathbf c_1,\ldots,\mathbf c_l$ be an enumeration of them. Clearly $D= \{ f^{\mathbf c}(\mathbf u_1,\ldots, \mathbf u_n) \mid f \in B, \mathbf c \in \{\mathbf c_1,\ldots,\mathbf c_l\} \}$ generates $\mathcal C(\mathbf u_1,\ldots, \mathbf u_n)$; note that we can compute it in linear time $O(kn)$. From the generating set $D$ we can compute a compact representation of  $\mathcal C(\mathbf u_1,\ldots, \mathbf u_n)$ in polynomial time (by generalized Gaussian elimination, or Theorem \ref{theorem:directprod}).
\end{proof}

\subsection{General $(\Z_p^{id}, \algL)$-clonoids}
For an arbitrary $(\Z_p^{id}, \algL)$-clonoid $\mathcal C$, let us define the subclonoid $\mathcal C^{\Delta} = \{f\in \mathcal C \colon f(x,\ldots,x) = 0 \}$. We then show, that every $f \in \mathcal C$ can be written in a unique way as the sum of an element of $\mathcal C^{\Delta}$, and a function that is generated by $\mathcal C^{(1)}$. For this, we need the following lemma:

\begin{lemma} \label{lemma:diagonal}
For any $f \in \mathcal C^{(n)}$, let us define
$$f'(\mathbf x) = p^{(1-n)}\sum_{\substack{\mathbf a \in \Z_p^n\\ \mathbf a \cdot \mathbf 1 = 1}} f(\mathbf a \cdot \mathbf x,\mathbf a \cdot \mathbf x,\ldots,\mathbf a \cdot \mathbf x).$$
Then $f-f' \in \mathcal C^{\Delta}$, and $f'$ is generated by $\mathcal C^{(1)}$.
\end{lemma}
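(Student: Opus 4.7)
The plan is to verify the two claimed properties of $f'$ directly from its explicit formula, using (i) a count of coefficient vectors lying on a hyperplane and (ii) the fact that $\Clo(\Z_p^{id})$ consists exactly of the affine forms $\mathbf x \mapsto \mathbf a \cdot \mathbf x$ with $\mathbf a \cdot \mathbf 1 = 1$ noted earlier in Section \ref{section:clonoids}.

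First I would show that $f - f' \in \mathcal C^\Delta$. Evaluating at a diagonal tuple $\mathbf x = (x,x,\ldots,x)$, every inner product $\mathbf a \cdot \mathbf x$ collapses to $x(\mathbf a \cdot \mathbf 1) = x$, so each summand becomes $f(x,\ldots,x)$. The number of vectors $\mathbf a \in \Z_p^n$ with $\mathbf a \cdot \mathbf 1 = 1$ equals $p^{n-1}$ (an affine hyperplane in $\Z_p^n$). Thus $f'(x,\ldots,x) = p^{1-n}\cdot p^{n-1}\cdot f(x,\ldots,x) = f(x,\ldots,x)$, where we use that $p^{1-n}$ is a well-defined element of $\Z_{|L|}$ since $\gcd(p,|L|)=1$. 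Hence $(f-f')(x,\ldots,x) = 0$, which is exactly the defining condition of $\mathcal C^\Delta$.

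Next I would show that $f'$ lies in the $(\Z_p^{id},\algL)$-subclonoid generated by $\mathcal C^{(1)}$. Set $g(y) := f(y,y,\ldots,y)$; this is $f$ composed on the inside with the (trivially $\Clo(\Z_p^{id})$-definable) identity in each coordinate, hence $g \in \mathcal C^{(1)}$. For each $\mathbf a$ with $\mathbf a \cdot \mathbf 1 = 1$, the map $\mathbf x \mapsto \mathbf a \cdot \mathbf x$ is an $n$-ary term of $\Z_p^{id}$, so $g(\mathbf a \cdot \mathbf x) \in \mathcal C^{(n)}$ by closure of the clonoid under $\Clo(\Z_p^{id})$ from the inside. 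Summing the $p^{n-1}$ such functions and scaling by $p^{1-n}$ are operations in $\Clo(\algL,0)$ (addition, negation, and an integer multiple that is well-defined thanks to $p \nmid |L|$), so closure of $\mathcal C$ under $\Clo(\algL,0)$ from the outside gives $f' \in \mathcal C^{(n)}$ and exhibits it as being generated by the unary $g$.

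The only real check is the counting argument $|\{\mathbf a : \mathbf a \cdot \mathbf 1 = 1\}| = p^{n-1}$ combined with invertibility of $p^{n-1}$ in $\Z_{|L|}$; everything else is straightforward bookkeeping with the clonoid closure axioms. No genuine obstacle is expected.
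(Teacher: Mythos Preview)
Your argument is correct and is essentially the same as the paper's proof: both verify $f'(x,\ldots,x)=f(x,\ldots,x)$ via the count $|\{\mathbf a:\mathbf a\cdot\mathbf 1=1\}|=p^{n-1}$, and both observe that $f'$ is built from the unary function $g(y)=f(y,\ldots,y)\in\mathcal C^{(1)}$ using inner compositions with $\Z_p^{id}$-terms and outer $\algL$-operations. You simply spell out in more detail what the paper compresses into one line (and you swap the order of the two verifications), but there is no substantive difference.
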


\begin{proof}
By definition, $f'$ is in the clonoid generated by the unary function $f(x,x,\ldots,x) \in \mathcal C^{(1)}$. 
Thus, to prove the lemma, it is only left to show that $f-f' \in \mathcal C^{\Delta}$, or, in other words, that $f(\mathbf x) = f'(\mathbf x)$ for $\mathbf x \in \Delta$.

But this is not hard to see, since 
$$f'(x,x,\ldots,x) = p^{(1-n)}\sum_{\substack{\mathbf a \in \Z_p^n\\ \mathbf a \cdot \mathbf 1 = 1}} f(x,x,\ldots,x) = f(x,x,\ldots,x).$$
\end{proof}

It follows in particular from Lemma \ref{lemma:diagonal} and Lemma \ref{lemma:idempotent} that every $(\Z_p^{id}, \algL)$-clonoid $\mathcal C$ is generated by any set $A \cup B$, such that $A$ generates $\mathcal C^{(1)}$ in $\algL^\Z_p$ and $B$ generates $\mathcal C_\Delta^{(2)}$ in $\algL^{\Z_p^2}$. Note that the clonoid generated by $A$ does not need to be disjoint from $\mathcal C^{\Delta}$. We can, however, still prove results analogous to the previous section.

\begin{lemma} \label{lemma:linbasisgeneral}
Let $\mathcal C$ be a $(\Z_p^{id},\algL)$-clonoid, let $A$ be a generating set of $\mathcal C^{(1)} \leq \algL^{\Z_p}$ and $B$ a generating set of $\mathcal C_\Delta^{(2)} \leq \algL^{\Z_p^2}$. For every $n$, let us define $A_n = \{ \sum_{\mathbf a \in \Z_p^n, \mathbf a \cdot \mathbf 1 = 1} f(\mathbf a \cdot \mathbf x) \mid f \in A \}$ and let $B_{n}$ be defined as in Lemma \ref{lemma:idempotent}. Then
$A_n \cup B_{n}$ is a generating set of $\mathcal C^{(n)}$ in $\algL^{\Z_p^n}$.
\end{lemma}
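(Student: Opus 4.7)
The plan is to take an arbitrary $f \in \mathcal C^{(n)}$ and split it via Lemma \ref{lemma:diagonal} as $f = f' + (f-f')$, where $f'$ is generated by the unary function $g(x) := f(x,x,\ldots,x) \in \mathcal C^{(1)}$ and $f - f' \in \mathcal C^{\Delta}$. I would then handle the two summands separately: show $f'$ lies in the subgroup of $\algL^{\Z_p^n}$ generated by $A_n$, and $f - f'$ lies in the subgroup generated by $B_n$.

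For the $f'$ part, note that $g \in \mathcal C^{(1)}$, and since $A$ generates $\mathcal C^{(1)}$ in $\algL^{\Z_p}$, we can write $g = \sum_i \lambda_i h_i$ as an integer linear combination of elements $h_i \in A$. Plugging into the explicit formula for $f'$ from Lemma \ref{lemma:diagonal} and interchanging the finite sums gives
\[
f'(\mathbf x) \;=\; p^{1-n}\!\!\sum_{\substack{\mathbf a \in \Z_p^n \\ \mathbf a\cdot\mathbf 1 = 1}}\!\sum_i \lambda_i\, h_i(\mathbf a\cdot\mathbf x) \;=\; p^{1-n}\sum_i \lambda_i \,\tilde h_i(\mathbf x),
\]
where $\tilde h_i(\mathbf x) := \sum_{\mathbf a\cdot\mathbf 1 = 1} h_i(\mathbf a\cdot\mathbf x) \in A_n$. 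Because $p \nmid |L|$, the factor $p^{1-n}$ is a unit in $\Z_{|L|}$ and can be absorbed into the coefficients, witnessing that $f'$ is a linear combination of $A_n$ in $\algL^{\Z_p^n}$.

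For the $f - f'$ part, Lemma \ref{lemma:diagonal} tells us $f - f' \in \mathcal C^{\Delta}$. Since $B$ generates $\mathcal C_{\Delta}^{(2)}$ in $\algL^{\Z_p^2}$, Lemma \ref{lemma:idempotent}(2) applied to the clonoid $\mathcal C^{\Delta}$ yields that its $n$-ary part is generated in $\algL^{\Z_p^n}$ by exactly $B_n = \{ h^{\mathbf c} : h \in B,\, \mathbf c \in C_n\}$. Thus $f - f'$ is in the subgroup generated by $B_n$.

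Adding the two decompositions shows $f$ is in the subgroup generated by $A_n \cup B_n$, which finishes the proof. There is no real obstacle here: the statement is essentially a repackaging of Lemmas \ref{lemma:diagonal} and \ref{lemma:idempotent}, and the only subtlety is the invertibility of $p^{1-n}$ modulo $|L|$, which is already built into the standing hypothesis $p \nmid |L|$.
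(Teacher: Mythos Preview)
Your argument is correct and follows essentially the same route as the paper: decompose $f$ via Lemma \ref{lemma:diagonal} into $f'$ and $f-f'$, handle $f-f'$ by Lemma \ref{lemma:idempotent}(2), and observe that $f'$ is a combination of elements of $A_n$. You spell out the $f'$ step in more detail than the paper (which just says ``by definition''), including the use of $p \nmid |L|$ to invert $p^{1-n}$, but the underlying strategy is identical.
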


\begin{proof}
We already know from Lemma \ref{lemma:idempotent} that $B_{n}$, generates $\mathcal C_\Delta^{(n)} \leq \algL^{\Z_p^n}$. 

By Lemma \ref{lemma:diagonal}, every element $f\in \mathcal C^{(n)}$ can be uniquely written as the sum $f'$ and $f-f'$. Furthermore $f'$, by definition, is generated by $A_n$, and $f-f'$ is in  $\mathcal C_\Delta^{(n)}$, which finishes our prove.
\end{proof}

Lemma \ref{lemma:linbasisgeneral} allows us to straightforwardly generalize Lemma \ref{lemma:SMP} to arbitrary $(\Z_p^{id},\algL)$-clonoids:

\begin{lemma} \label{lemma:SMP2}
Let $\mathcal C$ be a $(\Z_p^{id},\algL)$-clonoid. Then $\CompRep(\mathcal C) \in \comP$.
\end{lemma}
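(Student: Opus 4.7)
The plan is to generalize the algorithm of Lemma~\ref{lemma:SMP} to arbitrary $(\Z_p^{id},\algL)$-clonoids by means of the decomposition provided by Lemma~\ref{lemma:linbasisgeneral}. Fix once and for all finite generating sets $A$ of $\mathcal C^{(1)}$ and $B$ of $\mathcal C_\Delta^{(2)}$; since $\mathcal C$ is fixed, both $A$ and $B$ have constant size. By Lemma~\ref{lemma:linbasisgeneral}, the subgroup $\mathcal C(\mathbf u_1,\ldots,\mathbf u_n) \le \algL^k$ is generated by the evaluations of the elements of $A_n \cup B_n$ at $(\mathbf u_1,\ldots,\mathbf u_n)$, so it suffices to compute a polynomial-size list of such evaluations and then obtain a compact representation by generalized Gaussian elimination in the finite Abelian group $\algL$ (or by Theorem~\ref{theorem:SMPsupernilpotent}).

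The contribution of $B_n$ is handled exactly as in the proof of Lemma~\ref{lemma:SMP}: for $f \in B$ and $\mathbf c \in C_n$, the evaluation $f^{\mathbf c}(\mathbf u_1,\ldots,\mathbf u_n)$ can be non-zero at coordinate $i$ only if the column $(\mathbf u_1(i),\ldots,\mathbf u_n(i))$ lies in the two-dimensional subspace of $\Z_p^n$ spanned by $\mathbf 1$ and $\mathbf c$, so at most $k$ choices of $\mathbf c$ yield non-zero vectors. This gives $O(k)$ explicit generators, computable in $O(kn)$ time.

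The genuinely new step is evaluating the elements of $A_n$: each such function is a sum over the $p^{n-1}$ coefficient vectors $\mathbf a \in \Z_p^n$ with $\mathbf a \cdot \mathbf 1 = 1$, which is exponential in $n$. The key observation that resolves the obstacle is that this sum collapses coordinate-wise. Fixing a column $\mathbf v = (\mathbf u_1(i),\ldots,\mathbf u_n(i)) \in \Z_p^n$, the affine map $\mathbf a \mapsto \mathbf a \cdot \mathbf v$ restricted to the affine hyperplane $\{\mathbf a : \mathbf a \cdot \mathbf 1 = 1\}$ is constant with value $v$ when $\mathbf v = (v,\ldots,v)$ is itself constant, and is otherwise a surjection onto $\Z_p$ with uniform fibers of size $p^{n-2}$ (because on the parallel linear hyperplane $\{\mathbf a \cdot \mathbf 1 = 0\}$ the form $\mathbf a \mapsto \mathbf a \cdot \mathbf v$ vanishes iff $\mathbf v \in \mathrm{span}(\mathbf 1)$). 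Consequently, the $i$-th coordinate of the evaluation of $\sum_{\mathbf a \cdot \mathbf 1 = 1} f(\mathbf a \cdot \mathbf x)$ at $(\mathbf u_1,\ldots,\mathbf u_n)$ equals $p^{n-1}\cdot f(v)$ in the first case and $p^{n-2}\cdot \sum_{x \in \Z_p} f(x)$ in the second. Since $p$ is a fixed prime and $\algL$ is finite, both $p^{n-1}$ and $p^{n-2}$ can be reduced modulo the exponent of $\algL$ in polynomial time by fast exponentiation, yielding one explicit generator per $f \in A$. Combining the $A_n$- and $B_n$-contributions produces a generating set of $\mathcal C(\mathbf u_1,\ldots,\mathbf u_n)$ of size $O(k)$, from which a compact representation is extracted in polynomial time. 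The main obstacle is precisely this coordinate-wise collapse of an exponential sum; once it is in place, the rest is bookkeeping on top of Lemma~\ref{lemma:SMP} and existing linear algebra over finite Abelian groups.
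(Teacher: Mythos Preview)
Your proof is correct and takes essentially the same approach as the paper: decompose via Lemma~\ref{lemma:linbasisgeneral}, handle the $B_n$-contribution exactly as in Lemma~\ref{lemma:SMP}, treat the $|A|$ many $A_n$-generators separately, and finish with generalized Gaussian elimination. Your explicit coordinate-wise collapse of the exponential sum $\sum_{\mathbf a\cdot\mathbf 1=1} f(\mathbf a\cdot\mathbf x)$ is correct and in fact supplies a detail that the paper's proof leaves implicit (the paper only remarks that there are $|A|$ many such tuples without addressing how to evaluate each one in polynomial time).
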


\begin{proof}
Let $A_n$ and $B_n$ be defined as in Lemma \ref{lemma:linbasisgeneral}. Our goal is to compute a compact representation of $\mathcal C(\mathbf u_1,\ldots, \mathbf u_n)$ for some given $\mathbf u_1,\ldots, \mathbf u_n \in \Z_p^k$. By Lemma \ref{lemma:linbasisgeneral}, every $g \in \mathcal C$ decomposes into the sum of $g'$ and $g - g'$, where $g'$ is generated by $A_n$ and $g - g'$ is generated by $B_{n}$. Thus any image $g(\mathbf u_1,\ldots, \mathbf u_n)$ is in the linear closure of all tuples $f(\mathbf u_1,\ldots, \mathbf u_n)$, for $f \in A_n$ and $B_{n}(\mathbf u_1,\ldots, \mathbf u_n) = \{f^{\mathbb c}(\mathbf u_1,\ldots, \mathbf u_n) \mid f \in B, \mathbb c \in C_n\}$ in $\algL^k$. There are at most $|A|$-many tuples of the first form. Furthermore, as in the proof of Lemma \ref{lemma:SMP} we can compute a generating set of $B_{n}(\mathbf u_1,\ldots, \mathbf u_n)$ in polynomial time. By generalized Gaussian elimination (or Theorem \ref{theorem:SMPsupernilpotent}), we can obtain a compact representation from these generators in polynomial time.
\end{proof}

Lemma \ref{lemma:SMP2} allows us to finish the proof of our main result:

\begin{theorem} \label{theorem:main}
Let $\algA$ be a finite Mal'tsev algebra, with a central series $0_\algA < \rho < 1_\algA$ such that $|\algA/\rho| = p$ is a prime, and the blocks of $\rho$ are of size coprime to $p$. Then $\SMP(\algA) \in \comP$.
\end{theorem}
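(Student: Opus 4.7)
The plan is to compose the three principal tools developed earlier in the paper: the wreath product decomposition of Theorem \ref{theorem:wreath}, the reduction of Theorem \ref{theorem:SMP}, and the clonoid compact-representation algorithm of Lemma \ref{lemma:SMP2}.

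Since $0_\algA < \rho < 1_\algA$ is a central series, the congruence $\rho$ is itself central, so Theorem \ref{theorem:wreath} yields a representation $\algA \cong \algL \otimes^{T,0} \algU$ with $\algL$ affine on a single $\rho$-block and $\algU = \algA/\rho$ of prime size $p$. The hypothesis on block sizes then gives $p \nmid |L|$. The remaining condition $C(1_\algA,1_\algA;\rho)$ of the central series says exactly that $\algU$ is Abelian, hence by Herrmann's theorem affine. For Mal'tsev algebras, Abelian and $1$-supernilpotent coincide (both being the term condition $C(1_\algU,1_\algU;0_\algU)$), so $\algU$ is supernilpotent and Theorem \ref{theorem:SMP} applies: $\SMP(\algA)$ reduces in polynomial time to $\CompRep(\mathcal C)$ for $\mathcal C = \Diff_0(\algA)$.

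To finish, I apply Lemma \ref{lemma:SMP2} to $\mathcal C$. By Lemma \ref{lemma:difference}(2), $\mathcal C$ is a $(\algU,(\algL,0))$-clonoid. Because $\algU$ is affine on a set of prime size $p$, its Mal'tsev term $x-y+z$ already generates $\Clo(\Z_p^{id})$, so $\Clo(\Z_p^{id}) \subseteq \Clo(\algU)$ and every $(\algU,(\algL,0))$-clonoid is \emph{a fortiori} a $(\Z_p^{id},(\algL,0))$-clonoid. Combined with $p \nmid |L|$, the hypotheses of Lemma \ref{lemma:SMP2} are satisfied, giving $\CompRep(\mathcal C) \in \comP$. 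Chaining the two polynomial reductions yields $\SMP(\algA) \in \comP$.

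The substantive mathematical work has already been done in Theorem \ref{theorem:SMP} and Section \ref{section:clonoids}; the main theorem itself is essentially bookkeeping to verify that all hypotheses line up under the numerical assumptions on $|\algA/\rho|$ and the $\rho$-blocks. One minor subtlety worth checking is that a compact representation of $\mathcal C(\mathbf u_1,\ldots,\mathbf u_n)$ obtained by treating the target as the bare Abelian group $(L,+,0,-)$, as in Section \ref{section:clonoids}, remains a compact representation with respect to the richer operations of the $\algL$ that arises from the wreath product; this is automatic because the signature of a relation depends only on its underlying set, not on the operations of the ambient algebra.
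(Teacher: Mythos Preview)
Your proof is correct and follows essentially the same route as the paper's own proof: decompose $\algA$ as a wreath product via Theorem~\ref{theorem:wreath}, observe that $\algU$ is affine (hence supernilpotent) so that Theorem~\ref{theorem:SMP} applies, and then invoke Lemma~\ref{lemma:SMP2} after noting that $\Diff_0(\algA)$ is in particular a $(\Z_p^{id},(L,+,0,-))$-clonoid. Your write-up is in fact more explicit than the paper's in justifying why $\algU$ is supernilpotent and why the clonoid inherits the required closure properties.
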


\begin{proof}
By Theorem \ref{theorem:wreath}, $\algA$ is isomorphic to a wreath product $\algL \otimes \algU$,  such that $\algU$, $\algL$ are affine with $|U| = p$ and $|L|$ coprime to $p$.  By Theorem \ref{theorem:SMP}, $\SMP(\algA)$ reduces to $\CompRep(\Diff_0(\algA))$ in polynomial time. The difference clonoid is a clonoid from $\algU$ to $(\algL,0)$. Since both $\algL$ and $\algU$ are affine, and therefore have term operations describing $x-y+z$, $\Diff_0(\algA)$ is also a clonoid from $\Z_p^{id}$ to $(L,+,0,-)$. By Lemma \ref{lemma:SMP2}, $\CompRep(\Diff_0(\algA))$ is solvable in polynomial time, which finishes the proof.
\end{proof}

\begin{corollary} \label{cor:main}
For every nilpotent Mal'tsev algebra $\algA$ with $|A| = pq$ for distinct primes $p \neq q$, we have $\SMP(\algA) \in \comP$.
\end{corollary}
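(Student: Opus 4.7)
The plan is to reduce Corollary~\ref{cor:main} to Theorem~\ref{theorem:main} by exhibiting, for every such $\algA$, a congruence $\rho$ for which $0_\algA < \rho < 1_\algA$ is a central series, $|\algA/\rho|$ is prime, and the blocks of $\rho$ have size coprime to $|\algA/\rho|$. Throughout I use the standard Mal'tsev fact that all blocks of any congruence of $\algA$ have equal cardinality, so for $|A| = pq$ every proper non-trivial congruence $\alpha$ automatically satisfies $\{|\algA/\alpha|, \text{block size}\} = \{p,q\}$, and the two are coprime.

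Let $\zeta$ be the center of $\algA$. By nilpotency $\zeta > 0_\algA$. If $\zeta = 1_\algA$, then $\algA$ is Abelian, hence by Herrmann's theorem affine, and polynomially equivalent to a module whose additive group is the unique Abelian group of order $pq$ (cyclic of order $pq$). The $p$-torsion subgroup is characteristic and therefore invariant under every endomorphism induced by the ring action; it is thus a submodule, and corresponds to a congruence $\rho$ of $\algA$ with $|\algA/\rho| = q$ and blocks of size $p$. Since $C(1_\algA, 1_\algA; 0_\algA)$ holds, monotonicity of the centraliser in its first and third arguments gives both $C(\rho, 1_\algA; 0_\algA)$ and $C(1_\algA, 1_\algA; \rho)$, so $0_\algA < \rho < 1_\algA$ is the required central series.

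If $\zeta < 1_\algA$, I take $\rho := \zeta$; centrality gives $C(\zeta, 1_\algA; 0_\algA)$ by definition. For the other condition, observe that $\algA/\zeta$ is a Mal'tsev algebra of prime order, so simple, and nilpotent as a quotient of a nilpotent algebra. A non-trivial simple nilpotent algebra can only admit the central series $0 < 1$, which forces $C(1,1;0)$, i.e., it is Abelian; hence $\algA/\zeta$ is Abelian, which is precisely $C(1_\algA, 1_\algA; \zeta)$.

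In either case $\rho$ satisfies the hypotheses of Theorem~\ref{theorem:main}, yielding $\SMP(\algA) \in \comP$. The only conceptually non-trivial step is that the centre of a nilpotent Mal'tsev algebra of size $pq$ automatically produces an Abelian quotient in this size regime, obtained via the simple-nilpotent-implies-Abelian observation above; everything else is bookkeeping about block sizes, a single appeal to Herrmann's theorem, and monotonicity of the commutator.
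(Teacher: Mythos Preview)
Your proof is correct and follows essentially the same route as the paper: split into the Abelian and properly $2$-nilpotent cases and, in the latter, take $\rho$ to be the centre and invoke Theorem~\ref{theorem:main}. The one genuine difference is in the Abelian case: the paper simply cites generalized Gaussian elimination, whereas you manufacture a congruence $\rho$ from the $p$-torsion subgroup so that Theorem~\ref{theorem:main} applies uniformly. Your version is slightly longer but has the pleasant effect of making the corollary a pure specialization of Theorem~\ref{theorem:main} in all cases.

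One small correction: congruence uniformity (all blocks of a congruence having equal size) is not a general Mal'tsev fact. It does hold for \emph{nilpotent} algebras in congruence modular varieties (Freese--McKenzie), so the claim is valid in your setting; just adjust the attribution.
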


\begin{proof}
If $\algA$ is affine, then the result holds by (generalized) Gaussian elimination. So assume that $\algA$ is 2-nilpotent, but not affine. So $\algA$ is isomorphic to $\algL \otimes \algU$, and wlog. $|L|=q$ and $|U|=p$. Then the result follows directly from Theorem \ref{theorem:main}.
\end{proof}

\section{Discussion} \label{sect:discussion}

In Theorem \ref{theorem:main} we proved that every Mal'tsev algebra, which can be written as a wreath product $\algL \otimes \algU$ with $|U|=p$ and $p \nmid |L|$ has a tractable subpower membership problem.  But, since the reduction discussed in Theorem \ref{theorem:SMP} extends beyond this case, it is natural to ask, whether we can also extend the tractability also extends to all those cases:

\begin{question} \label{question:tractable}
Is $\SMP(\algL \otimes \algU) \in \comP$ for every supernilpotent Mal'tsev algebra $\algU$?
\end{question}

In particular, if $\algU$ is affine, Question \ref{question:tractable} asks, whether the subpower membership problem of all finite 2-nilpotent Mal'tsev algebras can be solved in polynomial time. By Theorem \ref{theorem:SMP}, this reduces to computing compact representations with respect the clonoids between affine algebras. Thus answering the question requires a better understanding of such clonoids.

Very recent result \cite{MW-clonoidsmodules} study such clonoids in the case where $\algU$ has a distributive congruence lattice, and $\algL$ is coprime to $\algU$. Such clonoids are always generated by functions of bounded arity (as in Lemma \ref{lemma:binary2}), thus we expect then similar argument as in Lemma \ref{lemma:SMP2} to work in solving $\CompRep(\mathcal C)$. We remark that the fact that every \emph{full} clonoid between such $\algU$ and $\algL$ is finitely generated was already implicitly used in \cite{KKK-CEQV-2nilpotent} to obtain polynomial time algorithm for checking whether two circuits over a 2-nilpotent algebra are equivalent. However \cite{MW-clonoidsmodules} does not cover all clonoids between affine algebras; e.g. for the case $\algU = \Z_p \times \Z_p$ and coprime $\algL$ nothing is known so far.

A reason why much emphasis is placed on coprime $\algU$ and $\algL$ is, that their wreath products $\algL \otimes^{0,T} \algU$ are not supernilpotent (for non-trivial operations $T$), and therefore not covered by Theorem \ref{theorem:SMPsupernilpotent}. In fact, finite Mal'tsev algebras in finite language are supernilpotent if and only if they decompose into the direct product of nilpotent algebras of prime power size (see e.g. \cite[Lemma 7.6.]{AM-commutator}).  It is further still consistent with our current knowledge that the conditions of Theorem \ref{theorem:directprod} are always met, for coprime  $\algL$ and $\algU$. This naturally leads to the question:

\begin{question} \label{question:product}
Is $\Clo(\algL \times \algU) \subseteq \Clo(\algL \otimes \algU)$, for all finite nilpotent Mal'tsev algebras $\algL \otimes \algU$ where $\algL$ and $\algU$ of coprime size?
\end{question}

In fact, in an unpublished proof \cite{AMW-reduct}, a positive answer to Question \ref{question:product} is given in the case that $\Clo(\algL \otimes \algU)$ contains a constant operations. A more general version of Question \ref{question:product} would ask, whether every finite nilpotent Mal'tsev algebra $\algA$ has a Mal'tsev term $m$, such that $(A,m)$ is supernilpotent.

Lastly we would like to mention that recently the property of \emph{short pp-defitions} was suggested as a witnesses for $\SMP(\algA) \in \comcoNP$. While Mal'tsev algebras that generate residually finite varieties have short pp-definitions \cite{BK-shortpp}, it is not know whether this is true in the nilpotent case. Thus we ask:

\begin{question} \label{question:shortpp}
Does every finite nilpotent Mal'tsev algebras $\algA$ have short pp-definitions (and hence $\SMP(\algA) \in \comNP \cap \comcoNP$)?
\end{question}

Studying Question \ref{question:shortpp} might especially be a useful approach to discuss the complexity for algebras of high nilpotent degree, if studying the corresponding difference clonoids turns out to be too difficult or technical endeavor.

\bibliography{bibliography}

\appendix

\section{Proof of Lemma \ref{lemma:fixvalues}} \label{appendix:fixval}

In this appendix we prove the second statement of Lemma \ref{lemma:fixvalues}, i.e. we show that for a given \emph{enumerated} compact representation $R$ of a subpower $\mathbf R = \Sg_{\algA^k}(X)$ of some Mal'tsev algebra, we can obtain an \emph{enumerated} compact representation $R'$ of $\mathbf R \cap \{ \mathbf x \in A^k \mid \mathbf x(1) = a_1,  \ldots, \mathbf x(k) = a_k\}$ for a given list of constants $a_1,\ldots,a_k$. In Algorithm \ref{alg:fixvalue} we describe the algorithm \texttt{Fix-value}$(R,a)$ that fixes the first coordinate of $\mathbf R$ to $a$; iterating this algorithm $m$ times results in the statement of the Lemma.

We remark that \texttt{Fix-value}$(R,a)$ is based on the \texttt{Fix-values} algorithm in \cite[Algorithm 5]{brady-CSPnotes}); although, for simplicity, we only fix the value of one coordinate.  Line 7 and 8 corresponds to the call of the subroutine \texttt{Nonempty} in  \cite[Algorithm 5]{brady-CSPnotes}), with the difference that we compute all the elements of the set $T_{j} = \{ (x,y) \in \proj_{1,j} \mathbf R \mid x = a \}$, instead of computing a witness for $(a,y) \in T_{j}$ once at a time.

We claim that the running time of \texttt{Fix-Value}$(R,a)$ is $O(|A|^2 \cdot n))$. For this note that the exhaustive search in line 7 and 8 will simply recursively apply $m$ to elements from $\proj_{1,j} (R)$ until the set is closed under $m$, and then select all values with $x_1=a$. Since $|\proj_{1,j}\mathbf R| \leq A^2$ this takes at most $|A|^2$ steps. For this reason also the size of the defining circuits $C_j$ (when e.g. stored all together as a circuit with multiple out-gates) is bounded by $|R| + |A|^2$.  Since the for-loop of line 6 has at most $n$ iterations, it follows that both the running time of the algorithm and the size of the defining circuits in $R'$ are bounded by $O(|A|^2 \cdot n)$.

If we then repeatedly call \texttt{Fix-value} to fix the value of the first $m$-many values of $\mathbf R$, this results in an algorithm that runs in time $O(|A|^2 \cdot nm))$.

Thus, the only thing that remains to prove is that the algorithm \texttt{Fix-Value} is correct. i.e. it indeed outputs an enumerated $R'$ with $\Sig(R') = \Sig(\mathbf R \cap \{ \mathbf x \in A^k \mid \mathbf x(1) = a \})$ (if the output is not empty).  So assume that $(i,b,c) \in \Sig(\mathbf R \cap \{ \mathbf x \in A^k \mid \mathbf x(1) = a \})$. If $i =1$, then clearly $(i,b,c) = (1,a,a)$, which is in $\Sig(R')$.  So let us assume wlog. that $i > 1$.  Since $R$ is a compact representation of $\mathbf R$, there exist tuples $\mathbf r_b, \mathbf r_c \in R$ (and defining circuits $p_{\mathbf r_b}$ and $p_{\mathbf r_c}$), witnessing that $(i,b,c) \in \Sig(R)$. Then $R'$ contains the tuples $\mathbf t$ and $\mathbf s = m(\mathbf t, \mathbf r_b, \mathbf r_c)$, as constructed in line 12 and 13 of Algorithm \ref{alg:fixvalue}. Since $\mathbf r_b$ and $\mathbf r_c$ agree on the first $i-1$ coordinates also $\mathbf t$ and $\mathbf s$ do. Moreover $\mathbf t(1) = a$,  $\mathbf t(i) = b$, and $\mathbf s(i) = m(b, b,c) = c$, thus $\mathbf t$ and $\mathbf s$ witness $(i,b,c) \in \Sig(\mathbf R \cap \mathbf x(1) = a)$. It follows that $\Sig(R') = \Sig(\mathbf R \cap \{ \mathbf x \in A^k \mid \mathbf x(1) = a \})$, which is what we wanted to prove.

\begin{algorithm}
\caption{An algorithm that, for a given enumerated compact representations $R$ of $\mathbf R = \Sg_{\algA^k}(X)$ outputs an enumerated compact representation $R'$ of the relation that fixes $x_1 = a$,  (where the defining circuits of $R'$ are evaluated on $X$)}
\label{alg:fixvalue}
\begin{algorithmic}[1]
\Procedure{Fix-Value}{$a \in A$,  $R$ (enum. c.r. of $\mathbf R = \Sg_{\algA^k}(X)$), Mal'tsev term $m$}
	\If {$(1,a,a) \not\in \Sig(R)$}
		\Return  $\emptyset$
	\Else
		\State Let $(\mathbf t, p_{\mathbf t}) \in R$ be such that $(\mathbf t, \mathbf t)$ is a witness of $(1,a,a) \in \Sig(R)$.
    	\State $R' = \{ (\mathbf t, p_{\mathbf t}) \}$
    		\For{$j > 1$}
    			\State Recursively apply $m$ to $\proj_{1,j}(R)$ to compute $T_{j} = \{ (x,y) \in \proj_{1,j} (\mathbf R) \mid x = a \}$,  
    			\State and circuits $C_{j} = \{ p_{(x,y)} \mid (x,y) \in T_j  \}$ such that $\proj_{1,j}(p_{(x,y)}(X)) = (x,y)$.
        	\For{$(j,b,c) \in \Sig(R)$}
        		\State Let $(\mathbf r_b, p_{\mathbf r_b}), (\mathbf r_c, p_{\mathbf r_c}) \in R$ be witnesses of $(j,b,c) \in \Sig(R)$
        		\If{$(a,b) \in T_j$}
        			\State Let $\mathbf t = p_{(a,b)}(X)$
        			\State $\mathbf s = m(\mathbf t, \mathbf r_b, \mathbf r_c)$ and $p_{\mathbf s} = m(p_{(a,b)}, p_{\mathbf r_b},p_{\mathbf r_c})$
        			\State $R'  = R' \cup \{ (\mathbf t, p_{(a,b)}), (\mathbf s,  p_{\mathbf s}) \}$
        		\EndIf
        	\EndFor
        	\EndFor
	\EndIf
\State \Return $R'$
\EndProcedure
\end{algorithmic}

\end{algorithm}

\end{document}